\theoremstyle{definition}
\newtheorem{lemma}{Lemma}
\newtheorem{cor}[lemma]{Corollary}
\newtheorem{prop}[lemma]{Proposition}
\newtheorem{theorem}[lemma]{Theorem}
\newtheorem{notation}[lemma]{Notation}
\newtheorem{definition}[lemma]{Definition}
\newtheorem{example}[lemma]{Example}
\newtheorem{question}[lemma]{Question}
\newtheorem*{theoremsp1}{Theorem \ref{thm:pure}}
\newtheorem*{theoremsp2}{Theorem \ref{thm:main}}
\DeclareMathOperator{\Cir}{Circ}
\newcommand{\Circ}{\Cir^{*}}
\newcommand{\Circc}{\Cir^{**}}
\DeclarePairedDelimiter{\gen}{\langle}{\rangle}
\title{Maximizing Cliques in Shellable Clique Complexes}
\author{Corbin Groothuis}
\subjclass[2010]{Primary 05C35; Secondary 05E45}
\begin{document}
\begin{abstract}
In extremal graph theory, the problem of finding the elements of a given class of graphs which contain the most cliques traces its routes back to Tur\'an's famous theorem. We consider the implications of the connectivity property of simplicial complexes known as shellability on clique complexes associated with graphs. In this paper, we find the graphs which maximize cliques among all graphs on $n$ vertices with $\Delta(G)\leq r$ that have shellable clique complexes.
\end{abstract}
\maketitle

\section{Introduction and Background}
A clique $K_t$ is a subset of $t$ vertices in a graph such that every pair of vertices is joined by an edge. We use $k(G)$ to denote the number of cliques in a graph $G$ and $k_t(G)$ for the number of cliques of size $t$ in a graph $G$. Cliques are at the center of the following important question in extremal graph theory:

\begin{question}\label{qn:only}
Given a class of graphs $\mathcal{G}$ which graphs $G \in \mathcal{G}$ maximize $k(G)$?
\end{question}

This question of maximizing cliques in graphs dates back to Tur\'an, who found the maximal number of edges in a graph that doesn't contain a clique of size $r$. In fact, that family of graphs that maximizes the number of all cliques on $n$ vertices without a clique of size $r$, though it was Zykov \cite{zykov} who actually showed this. 
\begin{theorem}(Zykov)
If $G$ is a graph with $n$ vertices and $\omega(G)\leq r$, then
$$k(G)\leq k(T(n,r)),$$
where $T(n,r)$ is the $r$-partite Tur\'an graph on $n$ vertices. 
\end{theorem}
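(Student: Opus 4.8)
The plan is to prove Zykov's theorem by a double induction (outer on $r$, inner on $n$), driven by the parallel recursive structure of the two sides. The one elementary fact I need about $k$ is that for any vertex $v$ of $G$, every clique of $G$ either misses $v$ or is obtained by adjoining $v$ to a clique of the induced subgraph $G[N(v)]$; counting the empty clique on both sides for convenience (this costs nothing, as the same convention is used everywhere), this reads $k(G) = k(G-v) + k(G[N(v)])$.

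The next step is to record two facts about Turán graphs. First, deleting a vertex $v$ from a largest part of $T(n,r)$ leaves $T(n-1,r)$, while $N(v)$ induces the complete $(r-1)$-partite graph $T(n-\lceil n/r\rceil,\,r-1)$; so the identity above gives the recursion $k(T(n,r)) = k(T(n-1,r)) + k(T(n-\lceil n/r\rceil, r-1))$. Second, for $a\le b$ the graph $T(a,s)$ is an induced subgraph of $T(b,s)$, hence $k(T(a,s))\le k(T(b,s))$. I also need the degree bound $\delta(G)\le \delta(T(n,r)) = n-\lceil n/r\rceil$ for every $K_{r+1}$-free $G$ on $n$ vertices; this follows from Turán's (edge) theorem, since a larger minimum degree would give $e(G)\ge \tfrac12 n\,\delta(G) > e(T(n,r))$ and thus force a copy of $K_{r+1}$.

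For the induction itself, the base cases $r=1$ and $n\le r$ are immediate ($G$ is then edgeless or a subgraph of $K_n$, and $T(n,r)$ is $\overline{K_n}$ or $K_n$). For the inductive step, let $G$ be $K_{r+1}$-free on $n>r$ vertices and pick $v$ of minimum degree. Then $G-v$ is $K_{r+1}$-free on $n-1$ vertices, so $k(G-v)\le k(T(n-1,r))$ by the inner induction; and $G[N(v)]$ is $K_r$-free (a $K_r$ in $N(v)$ together with $v$ would be a $K_{r+1}$) on $d(v)$ vertices, so $k(G[N(v)])\le k(T(d(v),r-1))\le k(T(n-\lceil n/r\rceil, r-1))$, using the outer induction, the degree bound, and monotonicity. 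Summing and applying the Turán recursion gives $k(G)\le k(T(n,r))$.

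I expect the degree bound to be the step needing the most care: one must verify that $\delta(G)$ exceeding $n-\lceil n/r\rceil$ genuinely pushes $e(G)$ past the Turán threshold, which amounts to a short case analysis on the residue of $n$ modulo $r$ for the Turán part sizes (or else citing the Andr\'asfai--Erd\H{o}s--S\'os-type fact that $\delta(G)>\tfrac{r-1}{r}n$ forces $K_{r+1}$). Everything else is bookkeeping once the two Turán-graph identities are in place. An alternative route is Zykov's original symmetrization argument: repeatedly replace a vertex by a ``clone'' of a non-neighbor whose neighborhood supports at least as many cliques, check that this never decreases $k$ and never creates a $K_{r+1}$, and conclude that some extremal graph may be taken complete multipartite, among which $T(n,r)$ is optimal; but the recursive argument above is the quicker one to write out.
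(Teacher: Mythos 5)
Your proposal is correct, but note that the paper does not prove this statement at all: it is quoted as a classical result with a citation to Zykov, so there is no in-paper argument to match. Your route is the recursive one: the identity $k(G)=k(G-v)+k(G[N(v)])$ (with the empty clique counted on both sides, which indeed costs nothing), the companion recursion $k(T(n,r))=k(T(n-1,r))+k(T(n-\lceil n/r\rceil,r-1))$ obtained by deleting a vertex of a largest part, monotonicity of $k(T(\cdot,s))$, and the minimum-degree bound $\delta(G)\le n-\lceil n/r\rceil$ for $K_{r+1}$-free $G$, assembled by double induction on $r$ and $n$. I checked the two points that need care and they hold: the neighborhood of a vertex in a largest part of $T(n,r)$ does induce $T(n-\lceil n/r\rceil,r-1)$, and $\delta(G)\ge n-\lceil n/r\rceil+1$ forces $e(G)\ge\tfrac12 n\,\delta(G)>e(T(n,r))$ (equivalently $\delta(G)>\tfrac{r-1}{r}n$), so Tur\'an's edge theorem yields a $K_{r+1}$; using Tur\'an's theorem as a black box here is not circular, since the clique-count statement is not used in its proof. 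The alternative you mention, Zykov's original symmetrization (replacing a vertex by a clone of a non-neighbor whose neighborhood carries at least as many cliques, reducing to complete multipartite graphs), is the argument the attribution in the paper refers to and is what Cutler--Radcliffe-style treatments follow; it avoids invoking Tur\'an's theorem and handles all $k_t$ simultaneously, whereas your recursion is shorter to write for $k$ alone, at the price of importing the edge bound. Either way, the statement stands as you argue it.
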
 
Another parameter of interest is maximum degree. While a maximum degree condition imposes a restriction on clique number, $\Delta(T(n,r))$ grows with $n$. In \cite{cutler2014}, Cutler and Radcliffe answered Question \ref{qn:only} for the class of graphs with $\Delta(G)=r$.
\begin{theorem}(Cutler and Radcliffe)
Let $n=a(r+1)+b$ for $a \in \mathbb{N}, 0\leq b \leq r$. If $G$ is a graph on $n$ vertices with $\Delta(G)\leq r$, then
$$k(G)\leq k(aK_{r+1}\cup K_b).$$
\end{theorem}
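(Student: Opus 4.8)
The plan is to reduce, by a sequence of local modifications (``switchings'') that never decrease the number of cliques and never violate the degree bound, to the case in which $G$ is a disjoint union of cliques, and then to single out $aK_{r+1}\cup K_b$ among all such graphs by a convexity argument. Two elementary facts are used throughout. First, for any vertex $v$ the cliques of $G$ containing $v$ are exactly the sets $\{v\}\cup Q$ with $Q$ a clique (or empty) of the induced subgraph on $N(v)$; hence, writing $\hat k(H)$ for the number of complete subgraphs of $H$ including the empty one, $\hat k(G)=\hat k(G-v)+\hat k(G[N(v)])$, and the number of cliques through $v$ equals $\hat k(G[N(v)])\le 2^{\deg v}\le 2^{r}$. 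Second, a complete subgraph of a disjoint union lies inside a single component, so $k$ is additive over components and $\hat k\bigl(\bigsqcup_i K_{n_i}\bigr)=1+\sum_i(2^{n_i}-1)$; moreover $k(K_m)=2^m-1$ and $K_m$ is the unique maximizer of $k$ among $m$-vertex graphs.

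Now fix $G$ maximizing $k$ among $n$-vertex graphs with $\Delta(G)\le r$. A first reduction: if two non-adjacent vertices both have degree $<r$ we may insert the edge between them, which keeps $\Delta\le r$ and strictly increases $k$, so in $G$ the vertices of degree $<r$ form a clique. Suppose $G$ is not a disjoint union of cliques; then, since ``being adjacent or equal'' fails to be transitive, there is an induced path $x\,v\,y$, that is, $v\sim x$, $v\sim y$, $xy\notin E(G)$. One then wants a switching that moves $G$ closer to a disjoint union of cliques without decreasing $k$. One candidate replaces the neighborhood of whichever of $x,y$ lies in the fewer cliques by a copy of the other's: if $c_G(x)\ge c_G(y)$, set $N(y):=N(x)$ (keeping $x\not\sim y$); since only edges at $y$ change, cliques avoiding $y$ are untouched, cliques through $y$ afterwards biject with cliques through $x$, and no clique meets both, so $k$ changes by $c_G(x)-c_G(y)\ge 0$. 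Another candidate contracts the closed neighborhood of a maximum-degree vertex $v$ (where $\deg v=r$, as $\Delta(G)<r$ forces $G=K_n$) into a clique while deleting its edges to the outside, which preserves $\Delta\le r$ since everyone in $N[v]$ reaches degree $r$ and everyone outside only loses edges. One also repeatedly uses the move that merges two clique components $K_p,K_q$ with $p+q\le r+1$ into $K_{p+q}$, changing $k$ by $(2^p-1)(2^q-1)\ge 0$. The aim is that, once all such moves are exhausted, $G$ is a disjoint union of cliques of order at most $r+1$.

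I expect the genuine difficulty to lie exactly here: one must pin down a set of switchings for which one can simultaneously prove (i) that $k$ does not decrease, (ii) that the degree bound $\Delta\le r$ survives --- the naive ``copy a neighborhood'' move raises the degrees of the vertices in $N(x)\setminus N(y)$ and is thus not always legal, and it is precisely for this step that the structure forced by maximality of $k$ (and, if needed, a secondary extremal choice, such as maximizing the number of edges, which should yield a local ``threshold'' nesting of neighborhoods making $N(x)\setminus N(y)$ empty) must be exploited --- and (iii) that the process terminates, which calls for a monovariant such as a suitable count of induced paths $x\,v\,y$ or the lexicographically sorted degree sequence. Granting the reduction, it remains to maximize $1+\sum_i(2^{n_i}-1)$ over tuples with $n_i\le r+1$ and $\sum_i n_i=n$. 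Convexity of $t\mapsto 2^t$ shows that whenever two parts satisfy $1\le n_i\le n_j\le r$, replacing them by $n_i-1$ and $n_j+1$ strictly increases the sum while keeping the tuple legal; iterating until no such pair remains forces all parts but one to equal $r+1$ and the last to equal $b$, that is, $G=aK_{r+1}\cup K_b$. Hence $k(G)\le k(aK_{r+1}\cup K_b)$ for every $n$-vertex graph $G$ with $\Delta(G)\le r$, as claimed.
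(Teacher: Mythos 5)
This statement is quoted in the paper from Cutler and Radcliffe \cite{cutler2014}; the paper itself gives no proof, so your attempt must stand on its own, and as written it does not. The easy parts are fine: the additivity of $k$ over components, the bound $n_i\le r+1$ for clique components, and the final convexity/shifting argument showing that among disjoint unions of cliques with parts of size at most $r+1$ the configuration $aK_{r+1}\cup K_b$ maximizes $1+\sum_i(2^{n_i}-1)$. But the entire content of the theorem is the reduction you yourself flag as unresolved: that some extremal graph is a disjoint union of cliques. The Zykov-type move $N(y):=N(x)$ is simply not available here, since it raises the degree of every vertex of $N(x)\setminus N(y)$ and can violate $\Delta\le r$; your suggested repair (a secondary maximization of $|E|$ forcing nested neighborhoods) is circular, because the threshold-like structure you want is itself a consequence of being able to symmetrize freely, which the degree cap forbids. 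The other candidate move --- completing $N[v]$ for a vertex $v$ of degree $r$ into a clique and deleting all edges from $N[v]$ to the outside --- does preserve $\Delta\le r$, but you give no argument that it cannot decrease $k$: it destroys every clique using an edge between $N(v)$ and $V\setminus N[v]$, and these can vastly outnumber the at most $2^{r+1}$ cliques gained inside $N[v]$ unless $v$ is chosen with care and the losses are counted against cliques through $v$; this counting is precisely where Cutler and Radcliffe do their real work. Termination of your process is also not established, but that is secondary.

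So the proposal is a plausible strategy outline with the decisive step missing, and the missing step is not a routine verification: with a maximum-degree constraint, local switchings that are legal and $k$-monotone are hard to come by, which is why the published proof proceeds by a careful analysis of the cliques meeting the closed neighborhood of a suitably chosen vertex (plus induction on $n$) rather than by naive symmetrization. To turn your sketch into a proof you would need, at minimum, a lemma of the form ``there is an extremal graph in which some vertex of degree $r$ has $N[v]$ complete and disconnected from the rest,'' proved by an explicit clique count comparing what is lost and gained by the contraction move for a well-chosen $v$; without that, the claim $k(G)\le k(aK_{r+1}\cup K_b)$ has not been derived.
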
 
This question has been extended to the following question focusing on cliques of a given size. 
\begin{question}\label{qn:jk}
Given a class of graphs $\mathcal{G}$ which graphs $G \in \mathcal{G}$ maximize $k_t(G)$ for a given $t$?  
\end{question}
Extensions of this type have been shown for each of the classes considered earlier.
\begin{theorem}(Zykov)
If $G$ is a graph with $n$ vertices and $\omega(G)\leq r$, then 
$$k_t(G)\leq k_t(T(n,r)).$$
\end{theorem}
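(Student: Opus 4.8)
The plan is to use Zykov symmetrization: first reduce to the case where an extremal graph is complete multipartite, then optimize over complete multipartite graphs with at most $r$ parts. Fix $n$ and $r$. If $n\le r$ then $T(n,r)=K_n$ and there is nothing to prove since $G\subseteq K_n$, so assume $n>r$. As there are only finitely many graphs on $\{1,\dots,n\}$, the maximum value $M$ of $k_t$ over all of them with clique number at most $r$ is attained; call such a graph \emph{extremal}, and among extremal graphs fix one, $G_0$, with the fewest edges.

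The first claim is that $G_0$ is complete multipartite. For a vertex $x$ write $c(x)=k_{t-1}(G_0[N(x)])$ for the number of $t$-cliques of $G_0$ through $x$. Given non-adjacent vertices $x,y$, \emph{retracting $y$ onto $x$} means deleting all edges at $y$ and then joining $y$ to exactly the vertices of $N(x)$. Since $G_0-y$ and the edges inside $N(x)$ are unchanged, the $t$-cliques through $y$ in the new graph $H$ are in bijection with those through $x$ in $G_0$, and every clique of $H$ is either a clique of $G_0$ or becomes one upon replacing $y$ by $x$; hence $k_t(H)=k_t(G_0)+c(x)-c(y)$ and $\omega(H)\le\omega(G_0)\le r$. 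Now suppose $G_0$ is not complete multipartite, so non-adjacency is not transitive; choose $u\not\sim v$, $v\not\sim w$, $u\sim w$ (a \emph{bad triple}). If $c(u)\ne c(v)$, retracting the one of $u,v$ with smaller $c$-value onto the other strictly increases $k_t$, contradicting maximality; likewise if $c(v)\ne c(w)$. So $c(u)=c(v)=c(w)$. Retract $u$ onto $v$ and then, in the resulting graph (where the edge $uw$ has disappeared but $v\not\sim w$ still holds), retract $w$ onto $v$; bookkeeping the $t$-cliques destroyed and created shows the net change in $k_t$ equals the number of $t$-cliques of $G_0$ through the edge $uw$, which must be $0$ since $k_t$ never exceeds $M$. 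But then deleting the edge $uw$ (which lies in no $t$-clique) gives an extremal graph with fewer edges than $G_0$, a contradiction. Hence no bad triple exists, non-adjacency is an equivalence relation, and $G_0$ is complete multipartite; since a clique meets each class at most once, it has at most $r$ parts.

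The second step is to maximize $k_t$ over complete multipartite graphs on $n$ vertices with at most $r$ parts. If such a graph has fewer than $r$ parts then, as $n>r$, some part has at least two vertices, and splitting it into two parts produces a complete multipartite supergraph with one more part (still at most $r$) and no fewer $t$-cliques; so we may assume there are exactly $r$ parts, of sizes $n_1,\dots,n_r$ with $\sum_i n_i=n$. A $t$-clique is then a choice of $t$ parts together with one vertex from each, so $k_t$ equals the elementary symmetric polynomial $e_t(n_1,\dots,n_r)=\sum_{|S|=t}\prod_{i\in S}n_i$. Holding $n_i+n_j$ fixed, $e_t(n_1,\dots,n_r)=A+B\,n_in_j$ with $A,B\ge 0$ depending only on the remaining sizes, so $e_t$ is maximized when $n_in_j$ is, i.e.\ when $|n_i-n_j|\le 1$; iterating, the maximum over all admissible partitions is attained at the balanced one, which is exactly $T(n,r)$. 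Combining the two steps yields $k_t(G)\le k_t(G_0)\le k_t(T(n,r))$ for every $G$ with $\omega(G)\le r$.

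The main obstacle is the middle paragraph. A single Zykov retraction only guarantees that $k_t$ does not decrease, so one cannot naively iterate and expect termination; the trick is to work with the edge-minimal extremal graph and to show that any bad triple in it has its chord $uw$ in no $t$-clique (this is where $c(u)=c(v)=c(w)$ and the two-step retraction are used), so that $uw$ can be removed without leaving the class of extremal graphs, contradicting minimality. Once $G_0$ is known to be complete multipartite, the remaining optimization is the routine convexity fact that $e_t$ of a fixed-sum tuple of nonnegative integers is maximized by the balanced tuple.
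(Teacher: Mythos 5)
Your argument is correct. Note that the paper does not prove this statement at all: it is quoted as background and attributed to Zykov with a citation, so there is no in-paper proof to compare against. What you have written is a complete and essentially self-contained version of the classical symmetrization proof: the retraction identity $k_t(H)=k_t(G_0)+c(x)-c(y)$ and the preservation of $\omega\le r$ are verified correctly, the edge-minimality trick correctly resolves the usual termination issue (a single retraction only gives weak increase), the two-step retraction through a bad triple $u\not\sim v$, $v\not\sim w$, $u\sim w$ with $c(u)=c(v)=c(w)$ does yield a net gain equal to the number of $t$-cliques on the chord $uw$, forcing that edge to be removable, and the final balancing of $e_t(n_1,\dots,n_r)$ via the decomposition $A+Bn_in_j$ with $B=e_{t-2}$ of the remaining parts is sound (one could add the one-line remark that each balancing move strictly decreases $\sum_k n_k^2$, so the iteration terminates at the balanced partition, which is $T(n,r)$). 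So the proposal supplies a valid proof of a result the paper only cites, and it is the standard one.
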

\begin{theorem}(Gan, Loh, and Sudakov \cite{gan})
Let $n=r+1+b$ for $0\leq b\leq r$. If $G$ is a graph on $n$ vertices with $\Delta(G)\leq r$ and $t\geq 3$, then $$k_t(G)\leq k_t(K_{r+1}\cup K_b).$$ 
\end{theorem}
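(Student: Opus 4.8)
This is a theorem of Gan, Loh, and Sudakov, so a complete proof is substantial; here is the route I would follow. Write $\binom{x}{t}=x(x-1)\cdots(x-t+1)/t!$ and note that on the nonnegative integers this is convex, since its second difference is $\binom{x-1}{t-2}\ge 0$; this convexity (trivial for $t=2$, but with real content for $t\ge 3$) is what pins down the extremal graph. For a vertex $v$ let $c_t(v)$ denote the number of $t$-cliques through $v$, so that $c_t(v)=k_{t-1}\bigl(G[N(v)]\bigr)$ and $t\,k_t(G)=\sum_v c_t(v)$. The plan has two stages: (1) reduce to the case in which $G$ is a disjoint union of cliques, each of size at most $r+1$; (2) optimize the clique count over all such disjoint unions.

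\textbf{Step 1 (reduction to a disjoint union of cliques).} Among all graphs on $n=r+1+b$ vertices with $\Delta\le r$ that maximize $k_t$, choose one, $G$, with the most edges; I would show $G$ must be a disjoint union of cliques. If not, some component contains an induced path $x\text{--}y\text{--}z$ (so $xy,yz\in E$, $xz\notin E$). If neither $x$ nor $z$ is \emph{saturated} (has degree exactly $r$), we may add the edge $xz$: this never decreases $k_t$ and strictly increases the edge count, contradicting the choice of $G$. Hence every induced $P_3$ has a saturated endpoint, and this possibility has to be eliminated by a symmetrization/cloning move that replaces a badly placed vertex by a twin of a well placed one, not decreasing $k_t$ and pushing $G$ toward a disjoint union of cliques.

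\textbf{The step I expect to be the main obstacle} is making such clique-producing moves compatible with the ceiling $\Delta\le r$: a move that would raise $k_t$ can force the degree of an already saturated neighbour up to $r+1$. Pushing the argument through requires a careful choice of move and of the order in which moves are applied, keeping track of the saturated vertices and using the hypothesis $b\le r$ (equivalently $n\le 2r+1$), so that every vertex misses only $b$ others; this is also exactly where $t\ge 3$ must be used, since for $t=2$ the extremal graph is near-regular and not a union of cliques at all. A perhaps cleaner setting for this step is the complement: if $H=\overline{G}$ then $\delta(H)\ge b$ on $r+1+b$ vertices, $k_t(G)$ is the number of independent $t$-sets of $H$, and the target $K_{r+1}\cup K_b$ becomes the complete bipartite graph $K_{r+1,b}$, so one wants that among $n$-vertex graphs of minimum degree at least $b$ the number of independent $t$-sets is maximized by $K_{r+1,b}$.

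\textbf{Step 2 (optimizing over disjoint unions of cliques).} Once $G=K_{m_1}\sqcup\cdots\sqcup K_{m_s}$ with $\sum_i m_i=n$ and each $m_i\le r+1$, we have $k_t(G)=\sum_i\binom{m_i}{t}$. If two parts have sizes $m_i\le m_j$ with $m_j\le r$, replace them by sizes $m_i-1$ and $m_j+1$ (discarding an empty part if $m_i=1$): the count changes by $\binom{m_j}{t-1}-\binom{m_i-1}{t-1}\ge 0$, while $\sum_i m_i^2$ strictly increases, so the process terminates. The only configurations to which it cannot be applied have all parts of size $r+1$ except one leftover part; since $n=r+1+b$ with $0\le b\le r$, the unique such configuration is $K_{r+1}\sqcup K_b$. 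Hence $k_t(G)\le\binom{r+1}{t}+\binom{b}{t}=k_t(K_{r+1}\cup K_b)$. (Equivalently, $(r+1,b,0,\dots,0)$ majorizes every feasible size vector, so Karamata's inequality for the convex function $\binom{\cdot}{t}$ applies.) Combining the two stages yields the theorem; Step 2 is routine convexity, and Step 1 carries all the weight.
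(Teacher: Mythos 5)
The paper does not prove this statement at all --- it is quoted from Gan--Loh--Sudakov \cite{gan} as a known result --- so your proposal has to stand on its own, and as written it does not: Step 1 is a plan, not a proof. You yourself flag the decisive difficulty (when every induced path $x$--$y$--$z$ has a saturated endpoint, the edge-adding move is blocked by the ceiling $\Delta\le r$) and resolve it only by asserting that some unspecified ``symmetrization/cloning move'' should push $G$ toward a disjoint union of cliques without decreasing $k_t$. That is exactly where the entire content of the theorem lies: replacing a vertex $v$ by a twin of a vertex $u$ increases the degree of every vertex of $N(u)$ by one, so it is precisely the move that the constraint $\Delta\le r$ forbids when $N(u)$ contains saturated vertices, and you exhibit neither a legal move in that situation nor an argument that a sequence of such moves terminates in a disjoint union of cliques. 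It is also not established (and not obvious) that a $k_t$-maximizer of maximum edge count must be a disjoint union of cliques; the published proof does not go this route --- Gan, Loh and Sudakov work in the complement (independent sets of size $t$ in graphs of minimum degree at least $b$, target $K_{r+1,b}$, as you note) and run a global counting/induction argument based at a minimum-degree vertex, and the hypothesis $t\ge 3$ does real work there, whereas in your sketch it appears only in a heuristic aside.

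Step 2 is correct and routine: for a disjoint union of cliques with parts of size at most $r+1$, the shifting move (equivalently Karamata applied to the convex function $\binom{\cdot}{t}$) gives $k_t(G)\le\binom{r+1}{t}+\binom{b}{t}$, and with $n=r+1+b$, $0\le b\le r$, the terminal configuration is $K_{r+1}\cup K_b$. But since, as you say, Step 1 carries all the weight, what you have is an outline with the key lemma missing rather than a proof.
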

This was later partially extended to graphs on any number of vertices by increasing the number of $K_{r+1}$'s.
\begin{theorem}(Cutler and Radcliffe \cite{cutler2017})
Let $r\leq 6$ and $n=a(r+1)+b$ for $a \in \mathbb{N},0\leq b\leq r$. If $G$ is a graph on $n$ vertices with $\Delta(G)\leq r$, then
$$k_t(G)\leq k_t(aK_{r+1}\cup K_b).$$
\end{theorem}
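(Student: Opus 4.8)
The plan is to prove by induction on $|V(G)|$ that $k_t(G)\le k_t(aK_{r+1}\cup K_b)$ for every $G$ with $\Delta(G)\le r$, under the standing assumption $t\ge 3$ (the case $t=2$ is the classical edge count, for which the stated graph is no longer optimal). Two observations organize the argument. First, $k_t$ is additive over connected components, so if $G$ is disconnected we apply the induction hypothesis to each component and are reduced to maximizing $\sum_i k_t(K_{m_i})=\sum_i\binom{m_i}{t}$ over partitions $n=\sum_i m_i$ with every $m_i\le r+1$; since the partition $(r+1,\dots,r+1,b)$ majorizes every such partition, Karamata's inequality applied to the convex function $m\mapsto\binom{m}{t}$ shows this sum is maximized exactly at $aK_{r+1}\cup K_b$. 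Hence we may assume $G$ is connected.

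If $|V(G)|\le r+1$ then $G$ is a subgraph of $K_{|V(G)|}$ and we are done. If $r+2\le|V(G)|\le 2r+1$, write $|V(G)|=(r+1)+b$ with $1\le b\le r$; a copy of $K_{r+1}$ in $G$ would be an entire component (each of its vertices already has degree $r$), which is impossible in this range, so $\omega(G)\le r$, and the bound $k_t(G)\le k_t(K_{r+1}\cup K_b)$ is precisely the Gan--Loh--Sudakov theorem quoted above, which we invoke directly. The remaining, genuinely new case is $|V(G)|\ge 2r+2$.

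For $|V(G)|\ge 2r+2$ I would use a clique-extraction move: choose an $(r+1)$-set $S\subseteq V(G)$ and pass to the graph $G'$ obtained by making $S$ complete, deleting every edge between $S$ and $V(G)\setminus S$, and leaving $G-S$ untouched. Then $\Delta(G')\le r$, the component $K_{r+1}$ on $S$ splits off, and $k_t(G')=\binom{r+1}{t}+k_t(G-S)$, whereas $k_t(G)=k_t(G[S])+k_t(G-S)+q$, where $q$ counts the $t$-cliques of $G$ meeting $S$ but not contained in it. Since $G-S=G[V(G)\setminus S]$ has $|V(G)|-(r+1)$ vertices, the induction hypothesis applied to it finishes the proof as soon as $k_t(G')\ge k_t(G)$, that is, as soon as $S$ can be chosen with $\binom{r+1}{t}-k_t(G[S])\ge q$. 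The leverage is that if $G[S]$ has $e\ge 1$ missing edges, then the vertices of $S$ collectively have only $2e$ units of degree available for edges leaving $S$, which bounds $q$ in terms of $e$; taking $S$ to contain a maximum clique of $G$ keeps $e$ small, and the hypotheses that $G$ is connected with at least $2r+2$ vertices exclude the degenerate local configurations in which an $(r+1)$-set is too sparsely joined to the rest of $G$.

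The crux — and the only place $r\le 6$ enters — is verifying that for a suitable choice of $S$ the inequality $\binom{r+1}{t}-k_t(G[S])\ge q$ really does hold for every admissible local structure and every $t\ge 3$ at once. Unwinding the bounds on $k_t(G[S])$ and on $q$ in terms of $e$ and of the degrees leaving $S$ turns this into a finite collection of inequalities among binomial coefficients $\binom{j}{i}$ with $i\le t$ and $j\le r+1$; these hold when $r\le 6$ and some of them fail for $r\ge 7$. Making this case analysis clean and complete — in particular, pinning down exactly which local configurations must first be eliminated using connectivity and the size bound $|V(G)|\ge 2r+2$ — is the main technical obstacle, and it is presumably why the result is currently known only for $r\le 6$.
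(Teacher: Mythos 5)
This statement is not proved in the paper at all: it is quoted as background, with the proof deferred to Cutler and Radcliffe's paper \cite{cutler2017}, so there is no in-paper argument to compare yours against. Judged on its own, your proposal sets up a reasonable induction frame (components via convexity/Karamata, the range $n\le 2r+1$ via Gan--Loh--Sudakov, and a clique-extraction step for $n\ge 2r+2$), but it is not a proof, because the only step carrying real content --- the existence of an $(r+1)$-set $S$ with $\binom{r+1}{t}-k_t(G[S])\ge q$ --- is exactly what you leave unverified. That inequality is the entire difficulty: the general statement without the restriction $r\le 6$ is the Gan--Loh--Sudakov conjecture, so one should not expect a short local-extraction argument to go through; the hypothesis $r\le 6$ cannot enter merely as ``a finite collection of binomial inequalities that happen to hold,'' since you never exhibit the finite list, never prove the reduction to it, and never verify a single case.

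Moreover, the quantitative leverage you cite does not close the gap even at the level of crude bounds. If $G[S]$ misses $e$ edges, then indeed at most $2e$ edges leave $S$, and each crossing edge lies in at most $\binom{r-1}{t-2}$ $t$-cliques, so all you get is $q\le 2e\binom{r-1}{t-2}$; on the other side, the deficit $\binom{r+1}{t}-k_t(G[S])$ is at most $e\binom{r-1}{t-2}$ (and strictly smaller when missing edges share vertices, since the $t$-sets they destroy overlap), so the naive comparison fails by at least a factor of $2$. Taking $S$ to contain a maximum clique does not rescue this: when $\omega(G)$ is small, $e$ is forced to be large and the bounds only get worse, and the appeal to connectivity and $|V(G)|\ge 2r+2$ to ``exclude degenerate local configurations'' is an assertion, not an argument. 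So the proposal should be regarded as a plan whose crucial lemma is missing, not as a proof of the cited theorem; if you want to present this theorem, cite \cite{cutler2017} as the paper does.
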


In \cite{kirsch}, Kirsch and Radcliffe also consider the class of graphs with $\Delta(G)\leq r$ and $\omega(G)\leq s$.

While these questions are about graphs, similar results have been framed in terms of simplicial complexes, such as the following corollary of the famed Kruskal-Katona theorem:

\begin{definition}
The colexigraphic order on $\mathcal{P}([n])$, $\leq_C$, is defined by $A\leq_C B$ if $\max(A\Delta B)\in B$. The colex graph $\mathcal{C}(n,m)$ is the graph with vertex set $[n]$ whose edges are the first $m$ edges in the colexigraphic order on $E(K_n)$. 
\end{definition}
Note that when $\binom{a}{2} < m < \binom{a+1}{2}$, this is just a $K_a$ where each vertex in the clique is then connected to the next vertex one at a time. 
\begin{theorem}(Corollary of Kruskal-Katona theorem \cite{kruskal} \cite{katona})
Let $G$ be a graph on $n$ vertices with $m$ edges. Then $k_t(G)\leq k_t(\mathcal{C}(n,m))$.
\end{theorem}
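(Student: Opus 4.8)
The plan is to recognize the $t$-cliques of a graph as a uniform family of $t$-element sets and to apply the Kruskal--Katona theorem to its $2$-element shadow. The cases $t\le 1$ are trivial, so assume $t\ge 2$ (when $t=2$ both sides equal $m$, since $\mathcal{C}(n,m)$ has exactly $m$ edges by definition). For a graph $H$ on vertex set $[n]$ write $\mathcal{K}_t(H)=\{A\in\binom{[n]}{t}:\binom{A}{2}\subseteq E(H)\}$ for the family of its $t$-cliques, so $k_t(H)=|\mathcal{K}_t(H)|$, and for a family $\mathcal{A}$ of $\ell$-sets and $j<\ell$ write $\partial_j\mathcal{A}=\{B:|B|=j,\ B\subseteq A\text{ for some }A\in\mathcal{A}\}$ for its $j$-shadow. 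The elementary observation driving everything is that $\partial_2\mathcal{K}_t(H)\subseteq E(H)$: every pair of vertices lying inside a clique is an edge.

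Now fix $G$ with $|E(G)|=m$, set $\mathcal{F}=\mathcal{K}_t(G)$ and $M=|\mathcal{F}|=k_t(G)$, and let $\mathcal{I}\subseteq\binom{[n]}{t}$ be the initial segment of the colexicographic order $\leq_C$ of size $M$. I would then proceed in three steps. \textbf{(1)} The iterated form of the Kruskal--Katona theorem yields $|\partial_2\mathcal{I}|\le|\partial_2\mathcal{F}|$: the one-step shadow of an initial colex segment is again an initial colex segment and has size no larger than the shadow of any equinumerous family, and $\partial_2$ is obtained by composing the one-step shadow operator $t-2$ times, the point being that at every stage the relevant comparison family can be taken to be an initial segment (and initial segments are nested by size, so larger ones have larger shadows). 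Together with the elementary observation this gives $|\partial_2\mathcal{I}|\le|\partial_2\mathcal{F}|\le|E(G)|=m$. \textbf{(2)} Since $\partial_2\mathcal{I}$ is an initial colex segment of $\binom{[n]}{2}=E(K_n)$ of size at most $m$, it is contained in the set of the first $m$ edges in colex order, which is precisely $E(\mathcal{C}(n,m))$. Hence every $A\in\mathcal{I}$ has all $\binom{t}{2}$ of its pairs in $E(\mathcal{C}(n,m))$, i.e.\ $A$ is a $t$-clique of $\mathcal{C}(n,m)$, so $\mathcal{I}\subseteq\mathcal{K}_t(\mathcal{C}(n,m))$. \textbf{(3)} Therefore $k_t(\mathcal{C}(n,m))=|\mathcal{K}_t(\mathcal{C}(n,m))|\ge|\mathcal{I}|=M=k_t(G)$.

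The only genuine work is step (1): pushing the textbook Kruskal--Katona bound on the one-step shadow down to the $2$-shadow while keeping every intermediate family an initial segment of $\leq_C$, so that the cardinality inequality can be upgraded in step (2) to the honest containment $\partial_2\mathcal{I}\subseteq E(\mathcal{C}(n,m))$ rather than a mere count. An alternative that sidesteps the iteration is to verify directly, from the description of $\mathcal{C}(n,m)$ as a $K_a$ with the remaining vertices joined one edge at a time, that $\mathcal{K}_t(\mathcal{C}(n,m))$ is itself an initial colex segment of $\binom{[n]}{t}$; but this is not actually needed, since only $\mathcal{I}\subseteq\mathcal{K}_t(\mathcal{C}(n,m))$ is used in step (3).
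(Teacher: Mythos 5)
Your argument is correct. Note that the paper does not actually prove this statement; it is quoted in the background section as a known consequence of the Kruskal--Katona theorem, with references to Kruskal and Katona, so there is no in-paper proof to compare against. What you give is the standard derivation: take $\mathcal{F}=\mathcal{K}_t(G)$, replace it by the colex initial segment $\mathcal{I}$ of the same size, use the iterated Kruskal--Katona bound $|\partial_2\mathcal{I}|\le|\partial_2\mathcal{F}|\le m$, and then upgrade the cardinality bound to the containment $\partial_2\mathcal{I}\subseteq E(\mathcal{C}(n,m))$ because $\partial_2\mathcal{I}$ is itself an initial colex segment of $\binom{[n]}{2}$, which forces $\mathcal{I}\subseteq\mathcal{K}_t(\mathcal{C}(n,m))$. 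The chain of reasoning is sound; the only ingredients you use beyond the one-step Kruskal--Katona theorem are the standard facts that the shadow of a colex initial segment is again a colex initial segment, that initial segments are nested by size, and that $\partial_j\mathcal{A}=\partial_j(\partial_{t-1}\mathcal{A})$ for $j<t-1$, all of which are routine but should be stated or cited explicitly (as you essentially do in step (1)). The handling of the trivial cases $t\le 2$ is also correct, since $\mathcal{C}(n,m)$ has exactly $m$ edges by definition.
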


An extension of Kruskal-Katona by Frankl, F\"uredi, and Kalai \cite{frankl} also gives a bound on the number of cliques in graphs with a fixed number of edges and no $K_{r+1}$.

Therefore, it's natural to consider other properties coming from simplicial complexes by using the following complex related to cliques of a graph.
\begin{definition}
Given a graph $G$, let $K(G)$ be the complex given by $$K(G)=\{A\in V(G): G[A] \text{ is complete} \}.$$ This is called the \textit{clique complex}.
\end{definition}

These complexes are also commonly known as \textit{flag complexes}. In this language, maximizing cliques of a graph is equivalent to maximizing the number of faces in the clique complex of a graph. We will consider a property of complexes called shellability, a notion of connectivity which specifies the overlap larger faces must have with each other, and try to maximize the number of faces in a shellable clique complex with certain additional parameters. The shelling condition on isolated vertices is trivial, and the shellability requirement requires one or less non-trivial connected component, so we will consider only connected graphs throughout the paper.

With just shellability, Question \ref{qn:only} is uninteresting, as $K_n$ is trivially shellable and has the most cliques among graphs on $n$ vertices. The Tu\'ran graph is also shellable, so it is the optimal graph with shellable clique complex and $\omega(G)\leq r$. 

However for $n>r+1$, $aK_{r+1}\cup K_b$ has two non-trivial connected components, so it is not shellable. Therefore, the natural class of graphs to consider is graphs with maximum degree $r$ whose clique complexes are shellable.

We make the following notational definitions. 
\begin{definition}
Let $f_r(n)=\max_G\{k(G)\}$, for $G$ a graph on $n$ vertices with $\Delta\leq r$ and $K(G)$ shellable. 
\end{definition}
\begin{definition}
Let $f_{r,t}(n)=\max_G\{k_t(G)\}$, for $G$ a graph on $n$ vertices with $\Delta \leq r$ and $K(G)$ shellable.
\end{definition}
In addition, we will consider maximizing cliques over the family of \textit{pure} complexes (see Definition \ref{def:pure}). 
\begin{definition}
Let $f^*_{r,t}(n)=\max_G\{k_t(G)\}$, for $G$ a graph on $n$ vertices with $\Delta \leq r$ and $K(G)$ a pure shellable complex.
\end{definition}

The following observation motivates the form of our results.
\begin{lemma}\label{lem:folklore}(Folklore)
Let $G$ be a graph with $\Delta(G)\leq r$. Then $k_t(G)\leq \binom{r}{t-1} n$. In addition, $k(G)\leq 2^r n$.
\end{lemma}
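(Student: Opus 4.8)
The plan is to count cliques by \emph{anchoring} each one at one of its vertices and then bounding, for a fixed vertex $v$, how many cliques of a given size can contain $v$. The key observation is that a clique of size $t$ containing $v$ is precisely $\{v\}$ together with a clique of size $t-1$ inside the induced subgraph $G[N(v)]$ on the neighborhood of $v$, and $|N(v)| \le \Delta(G) \le r$.

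First I would fix $v \in V(G)$ and note that $K \mapsto K \setminus \{v\}$ is a bijection between the cliques of size $t$ containing $v$ and the cliques of size $t-1$ contained in $N(v)$. Since any such $(t-1)$-element clique is in particular a $(t-1)$-element subset of $N(v)$, the number of them is at most $\binom{|N(v)|}{t-1} \le \binom{r}{t-1}$. Summing over all $n$ choices of $v$ counts each clique of size $t$ once for each of its $t$ vertices, hence at least once, so
$$k_t(G) \le \sum_{v \in V(G)} \binom{r}{t-1} = \binom{r}{t-1}\, n,$$
which is the first claim. (One actually gets the extra factor $1/t$, but it is not needed here.)

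For the second claim there are two equally quick routes. Either sum the first bound over $t$, using $\sum_{t \ge 1}\binom{r}{t-1} = \sum_{j=0}^{r}\binom{r}{j} = 2^r$; or argue directly that the cliques containing a fixed $v$ are in bijection with the cliques of $G[N(v)]$, of which there are at most $2^{|N(v)|} \le 2^r$ since $N(v)$ has at most $r$ vertices, and then sum over $v$ to get $k(G) \le 2^r n$.

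There is essentially no real obstacle; the only point requiring a little care is the bookkeeping of which faces are being counted (whether singletons, and the empty face, are included) and the fact that the anchoring argument overcounts — which only works in our favor. I would therefore phrase the neighborhood bijection as a one-line observation and let both inequalities drop out immediately.
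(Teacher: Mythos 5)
Your proposal is correct and follows essentially the same route as the paper: anchor each $t$-clique at a vertex $v$, bound the cliques through $v$ by the number of $(t-1)$-cliques in $G[N(v)]$, which is at most $\binom{r}{t-1}$ since $|N(v)|\leq r$, and sum over vertices; the $2^r n$ bound follows the same way. Your added remarks about the overcount by a factor of $t$ and the handling of the total count are fine but not needed.
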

\begin{proof}
For a vertex $v\in V(G)$, $k_{t-1}(G[N(v)])$ counts the number of $t$-cliques that $v$ is in. This is maximized when the neighborhood of $v$ is complete, i.e. when $N(v)=K_r$, and $k_{t-1}(K_r)=\binom{r}{t-1}$. Summing over all vertices gives an upper bound for $k_t(G)$.
\end{proof}
Since any sequence of graphs $G_1,G_2,\ldots,G_n$ with $|V(G_n)|=n$ has $k_t(G_n)\leq cn$ by the above lemma, we wish to determine for a class of graphs what the optimal $c$ is. Thus we will determine the values of $$\lim\limits_{n \to \infty}\dfrac{f_{r,t}(n)}{n} \text{ and }  \lim\limits_{n \to \infty}\dfrac{f_{r}(n)}{n}$$. 

We are interested in whether $\lim\limits_{n \to \infty}f_{r,t}(n)/n$ exists and if there is a family of graphs that achieve the limit for $t\leq r/2 + 1$. In doing so, we will also show that $\lim\limits_{n \to \infty}f_r(n)/n$ exists. While it is possible for any given $n$ and $r$ to find a graph that is extremal in $k(G)$, the overall improvement is negligible asymptotically and these extremal graphs are built by adding a handful of small facets to the families of graphs that we construct to achieve the limit.  

In this paper, we will answer Questions \ref{qn:only} and \ref{qn:jk} for the class of graphs with bounded maximal degree whose clique complexes are shellable. Section \ref{basicdef} introduces basic definitions and notation for clique complexes.
Due to an inherent parity influence on the problem, the even and odd cases are considered in separate sections. Section \ref{section:even} introduces the following class of clique complexes:
\begin{definition}
We will denote by $\Circ(n,r)$ the complex on $n$ vertices with max degree $r$ with $n- \lceil r/2 \rceil $ facets given by:
$$F_i=\{i,i+1,i+2,\ldots,i+\lfloor r/2 \rfloor \}, 1\leq i \leq n- \lceil r /2 \rceil $$
\end{definition}

The tight restrictions that maximum degree and shellability impose together allows us to reduce most of the problem to focusing on pure complexes of a certain size.
These complexes, based on circulant graphs turn out to be the only pure complexes with $\omega(G)=r/2+1$ for $n$ large enough.
\begin{theorem}\label{thm:pure}
Let $K(G)$ be a pure shellable complex with $\omega(G)=r/2+1$ and $\Delta(G)=r=2k$. If $n \geq r(r/4)(r/2+1)(r/2+2)+1$, then $K(G)=\Circ(n,r)$.
\end{theorem}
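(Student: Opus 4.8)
The plan is to exploit the rigidity forced by imposing $\Delta(G)=2k$, $\omega(G)=k+1$, and pure shellability all at once, and to show that once $n$ exceeds the stated bound there is no room for any ``closed up'' configuration, so the complex must string out into the interval pattern defining $\Circ(n,r)$.

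First I would record the structural preliminaries. Since $\omega(G)=k+1$, every maximal clique has at most $k+1$ vertices, and purity upgrades this to the statement that every facet of $K(G)$ is a $K_{k+1}$; thus $K(G)$ is a pure shellable complex of dimension $k$. For a vertex $v$, the link $\operatorname{lk}(v)$ is the clique complex of the induced subgraph $G[N(v)]$; it is shellable (the link of any vertex in a shellable complex is shellable), pure of dimension $k-1$, supported on at most $2k$ vertices by the degree bound, and has clique number exactly $k$ (a $K_k$ inside $N(v)$ extends to a facet of $K(G)$, while a $K_{k+1}$ inside $N(v)$ would produce a $K_{k+2}$ in $G$). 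Finally, since every vertex lies in some facet of size $k+1$, the number of facets grows linearly with $n$.

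The core of the argument is a local analysis at a vertex $v$ of maximum degree $2k$. There $\operatorname{lk}(v)$ is a pure shellable $(k-1)$-complex on \emph{exactly} $2k$ vertices with clique number $k$; these can be classified (a finite problem for each $k$), and among them I would single out the ``generic'' one, namely $\Circ(2k,2k-2)$, which encodes a linear order $v_1<\dots<v_{2k}$ on $N(v)$ with $v_a\sim v_b$ inside $N(v)$ if and only if $|a-b|\le k-1$. When $\operatorname{lk}(v)$ is generic, this order together with $v$ displays a window of $2k+1$ consecutive vertices of a copy of $\Circ$. I would then propagate outward: moving to the endmost vertex of the current window, if it again has degree $2k$ with a generic link, the compatibility of the two overlapping local orders forces the window to lengthen by exactly one vertex. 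This process can only stop if the window closes up cyclically, or reaches a vertex of degree $<2k$, or reaches a vertex with a non-generic link. For $k\ge 2$ the first option is impossible: a cyclic configuration retracts onto its underlying $n$-cycle and so has nonzero first homology, whereas a shellable complex of dimension $\ge 2$ is homotopy equivalent to a wedge of $k$-spheres and has vanishing first homology. The other two options correspond either to the ends of a $\Circ$-strip or to sporadic closed-up pieces, and I would show that any pure shellable complex with $\Delta\le 2k$ and $\omega=k+1$ which is not literally an interval pattern has at most $k^2(k+1)(k+2)$ vertices, by bounding how far the propagation can run before it is forced to close up, using the degree constraint at every step. This is precisely where the hypothesis $n\ge k^2(k+1)(k+2)+1$ enters, and it forces the window to sweep out all of $V(G)$, so $K(G)=\Circ(n,r)$.

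The step I expect to be the main obstacle is the combination of this local rigidity with the quantitative control of the propagation. Even for $k=2$ the link of a degree-$4$ vertex can be $C_4$ or $K_{1,3}$ rather than $\Circ(4,2)$, and these non-generic links really do arise, in the octahedron (whose vertices all have $C_4$ links) and, for general $k$, in the boundary of the cross-polytope with $2k+2$ vertices each of degree $2k$. Hence it is not enough to note that exceptional examples occur only for small $n$; one has to pin down the explicit polynomial bound $k^2(k+1)(k+2)$ on the number of vertices of such examples, and extracting that bound, rather than any single slick lemma, is the real technical heart of the proof.
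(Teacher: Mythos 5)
There is a genuine gap. Your argument reduces the theorem to the claim that every pure shellable flag complex with $\Delta\le 2k$ and $\omega=k+1$ that is not an interval pattern has at most $k^2(k+1)(k+2)$ vertices, but you never prove that claim; you explicitly defer it as the ``technical heart.'' Since the conclusion follows immediately from that claim, what you have written is a plan rather than a proof. The ingredients you do supply do not close it: the classification of links at a vertex of degree $2k$ is only ``a finite problem'' for each fixed $k$ and is not carried out uniformly in $k$; the propagation step is analyzed only when every link encountered is the generic $\Circ(2k,2k-2)$, and the non-generic links (which, as you note, really occur, e.g.\ the cocktail-party graph whose clique complex is the cross-polytope boundary) are precisely the configurations that must be shown to have bounded size. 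Moreover, the justification ``a cyclic configuration retracts onto its underlying $n$-cycle'' is false in general --- the octahedron and higher cross-polytope boundaries are simply connected --- and nonvanishing $H_1$ for the clique complex of a cycle power only holds once the cycle is long relative to $k$, so even the homological exclusion of closed-up configurations needs the same quantitative control that is missing.

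For comparison, the paper obtains the initial rigid piece without classifying links: it (i) bounds the number of structural facets by a free-degree count (Proposition \ref{prop:nottoobig}), (ii) passes to the facet-adjacency graph $K(G,m)$, shows its maximum degree is at most $r$, deletes the few structural facets, and pigeonholes (this is exactly where $n\ge r(r/4)(r/2+1)(r/2+2)+1$ is used) to find $r/2+3$ connected vertex-adding $m$-facets spanning $r+3$ vertices, (iii) invokes the $K_m$-tree rigidity result (Theorem \ref{thm:unique}) to conclude these facets form a $\Circ(r+3,r)$, and (iv) propagates via Proposition \ref{prop:tendril}: inside a pure complex with $\omega=m$, a $\Circ(r+3,r)$ has degree-saturated interior vertices, so the only way to attach further facets extends the strip, forcing $K(G)=\Circ(n,r)$. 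Your propagation idea is close in spirit to (iv), but to make your route work you would need an actual substitute for (i)--(iii): a uniform-in-$k$ argument bounding the size of the region where links are non-generic or degrees are deficient. The paper's free-degree and pigeonhole counting is exactly such an argument, and it is the source of the explicit bound on $n$ that you assume but do not derive.
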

By leveraging Theorem \ref{thm:pure} with bounds on the number of smaller facets, we are able to prove our main result.
\begin{theorem}\label{thm:main}
Let $r=2k$ for some $k$. Then $\lim\limits_{n \to \infty} \dfrac{f_{r,t}(n)}{n}=\dbinom{r/2}{t-1}$.
\end{theorem}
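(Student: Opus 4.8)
The plan is to establish the two inequalities $\liminf_{n\to\infty} f_{r,t}(n)/n \ge \binom{r/2}{t-1}$ and $\limsup_{n\to\infty} f_{r,t}(n)/n \le \binom{r/2}{t-1}$; the first is immediate and the second contains essentially all of the difficulty. For the lower bound I would simply compute $k_t$ on $\Circ(n,r)$ itself: it is the clique complex of the $(r/2)$-th power of a path, so $\Delta=r$, and the interval facets shell in their natural order because $F_i$ meets $F_1\cup\cdots\cup F_{i-1}$ in exactly the codimension-one face obtained by deleting its largest vertex. Hence for each $i\ge 2$ the shelling adds precisely the $t$-subsets of $F_i$ that use the new vertex, i.e.\ $\binom{r/2}{t-1}$ new $t$-cliques, giving $k_t(\Circ(n,r)) = \binom{r/2}{t-1}\,n + O_r(1)$. (When $t>r/2+1$ this is $0$, and the statement will instead be a byproduct of the bound on wide facets below, so assume $t\le r/2+1$.)

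For the upper bound, fix a connected graph $G$ with $\Delta(G)\le r$ and a shelling $F_1,\dots,F_m$ of $K(G)$ with restriction faces $R(F_j)$ of sizes $\rho_j$ (so $\rho_1=0$ and $\rho_j\ge 1$ for $j\ge 2$). Since the shelling partitions the faces of $K(G)$ according to which minimal new face they contain,
\[
k_t(G) = \sum_{j=1}^{m}\binom{|F_j|-\rho_j}{t-\rho_j}.
\]
I would then separate facets by size. A facet of size $r/2+1$ contributes at most $\binom{r/2}{t-1}$, attained only when $\rho_j=1$; a facet of size at most $r/2$ contributes at most $\binom{r/2-1}{t-1}$, strictly below this ``budget''. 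A \emph{wide} facet ($|F_j|\ge r/2+2$) is the dangerous case, but here the shelling helps: one of its maximal proper faces $F_j\setminus\{v\}$ with $v\in R(F_j)$ is a clique of size $|F_j|-1\ge r/2+1$ that is already present, hence contained in an earlier facet, and that facet must itself be wide (a narrow facet is too small to contain it). So the wide facets form a forest rooted at $F_1$ with each child glued to its parent along all but one of its vertices; a degree count then bounds both the branching and the depth of this forest by constants depending only on $r$, so there are only $O_r(1)$ wide facets and they carry only $O_r(1)$ cliques of every size. This already settles the case $t>r/2+1$.

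It remains to show that the facets of size exactly $r/2+1$ --- the ones sitting at the budget --- number at most $n+O_r(1)$, and this is where I expect the real work and the main obstacle to be, because $K(G)$ need not be pure. The idea is to show that the wide facets, the facets of size $<r/2+1$, and the restriction data they carry can only create $O_r(1)$ ``defects'': once these are excised, the remaining top-dimensional facets organize into a bounded number of long pure shellable subcomplexes, each with $\omega = r/2+1$ and $\Delta\le r$, to which Theorem~\ref{thm:pure} applies, forcing each of them to be a copy of $\Circ(N,r)$ and hence to have exactly $N-r/2$ facets. Summing the per-facet budgets over all facets then yields $k_t(G)\le \binom{r/2}{t-1}\,n + O_r(1)$, which gives the matching upper bound and the theorem; the delicate part is precisely quantifying how far non-purity and the small facets can propagate into the top-dimensional skeleton, which is where the bounds on the number of smaller facets are needed.
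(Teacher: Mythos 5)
Your lower bound and your treatment of wide facets are fine (the latter is essentially the free-degree count of Proposition~\ref{prop:nottoobig}, though your ``depth of the forest'' bound is only sketched and is cleaner if you just note that, in a size-decreasing shelling, every facet of size $\geq r/2+2$ strictly decreases the available free degree). The genuine gap is in the main accounting for the upper bound. You reduce the problem to showing that the facets of size exactly $m=r/2+1$ number at most $n+O_r(1)$, treating everything smaller as ``strictly below budget'' and the non-purity as $O_r(1)$ worth of ``defects'' to be excised before invoking Theorem~\ref{thm:pure}. But the sub-top part of the complex is not an $O_r(1)$ defect: there can be $\Theta(n)$ facets of size $<m$, and, more importantly, $\Theta(n)$ \emph{structural} facets (facets with restriction size $\geq 2$ that add no new vertex --- by Lemma~\ref{lem:structfacets} each adds one edge, so there can be up to about $rn/2$ of them). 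Your proposed summation ``per-facet budget $\times$ number of facets'' then does not close: vertex-adding facets are at most $n$ in total, but the structural facets contribute up to $\binom{s-2}{t-2}$ each, and multiplying by their possible number overshoots $\binom{r/2}{t-1}n$ by a constant factor. The bound is in fact tight exactly along this trade-off (e.g.\ for $r=6$, $t=2$, an all-triangle complex can have about $n$ vertex-adding and $n$ structural facets of size $3$, giving $2n+n=3n=\binom{3}{1}n$), so any argument must quantify precisely how many structural facets a given deficit in facet size buys. That is the content you are missing: the paper's Lemma~\ref{lem:epsilon}, which converts the free-degree budget into the bound $\sum_j c_{s-j} < \sum_j (i+j+o(1))\,b_{s-j}$ on structural versus vertex-adding facets at each size, combined with Lemma~\ref{lem:binom}, the inequality $\binom{s-1}{t-1}+(m-s)\binom{s-2}{t-2}\leq\binom{m-1}{t-1}$, which shows the combined contribution per added vertex never exceeds the $\Circ(n,r)$ rate. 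Without these two ingredients the case $\omega(G)<m$ (and the sub-top layers in the mixed case) is not handled at all.

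Two secondary points. First, your plan to apply Theorem~\ref{thm:pure} to ``the remaining top-dimensional facets organized into pure shellable subcomplexes'' needs justification: a subcomplex generated by a subset of facets of a shellable complex is not automatically shellable, and Theorem~\ref{thm:pure} is stated for clique complexes $K(G)$ of graphs with the degree bound, so one must also know $\omega(G)=m$ globally; the paper gets this from Proposition~\ref{prop:toomanycooks} by splitting on whether the number of vertices lying in $K_m$'s is large, rather than by excising defects. Second, even in the favorable case where the top-dimensional part is a single $\Circ(q,r)$, you still must control the structural and smaller facets attached to it (the paper bounds those attached to the ends by $2^r$ and runs the same Lemma~\ref{lem:epsilon}/Lemma~\ref{lem:binom} computation on the rest), so the weighted trade-off is needed in both branches, not only in the impure one.
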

A corollary gives the result for maximizing cliques in general.
\begin{theorem} \label{thm:mainz}
Let $r=2k$ for some $k$. Then $\lim\limits_{n \to \infty} \dfrac{f_r(n)}{n}=2^{r/2}$. In addition, for $G_n=\Circ(n,r)$, $\lim\limits_{n \to \infty} \dfrac{k(G_n)}{n}=2^{r/2}$, so the limit is achieved.
\end{theorem}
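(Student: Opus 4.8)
The plan is to deduce this from Theorem~\ref{thm:main}: the upper bound by summing the per-size estimates over all clique sizes, and the lower bound by a direct count of the cliques of $\Circ(n,r)$. For the upper bound I would begin with $k(G)=\sum_{t\ge 1}k_t(G)$ and note that a connected graph on $n>r+1$ vertices with $\Delta(G)\le r$ contains no $K_{r+1}$ (such a clique would saturate the degrees of all of its vertices and so be an isolated component), so the sum has at most $r+1$ nonzero terms. Since a maximizer for $f_r(n)$ satisfies exactly the constraints appearing in the definitions of $f_{r,1}(n),\dots,f_{r,r+1}(n)$, we get $f_r(n)\le\sum_{t=1}^{r+1}f_{r,t}(n)$. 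Dividing by $n$ and invoking Theorem~\ref{thm:main} for each of these finitely many $t$ (with limit $\binom{r/2}{t-1}$, which is $0$ once $t>r/2+1$) gives
\[
\limsup_{n\to\infty}\frac{f_r(n)}{n}\ \le\ \sum_{t=1}^{r+1}\binom{r/2}{t-1}\ =\ \sum_{j=0}^{r/2}\binom{r/2}{j}\ =\ 2^{r/2}.
\]

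For the lower bound it suffices to show $k(\Circ(n,r))/n\to 2^{r/2}$, since $\Circ(n,r)$ is connected, has $\Delta=r$, and (by Section~\ref{section:even}) has a shellable clique complex, so $f_r(n)\ge k(\Circ(n,r))$. I would use the characterization that $S\subseteq[n]$ is a clique of $\Circ(n,r)$ if and only if it lies in some facet $F_i=\{i,\dots,i+r/2\}$, i.e.\ if and only if $\max S-\min S\le r/2$. Grouping cliques by their least element $m$, a clique with $\min S=m$ is $\{m\}\cup T$ with $T$ an arbitrary subset of $\{m+1,\dots,\min(m+r/2,n)\}$: for $1\le m\le n-r/2$ this yields exactly $2^{r/2}$ cliques (and $\binom{r/2}{t-1}$ of size $t$, matching Theorem~\ref{thm:main}), while the remaining $r/2$ values of $m$ contribute at most $(r/2)\,2^{r/2}$ cliques in total. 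Hence $k(\Circ(n,r))=2^{r/2}(n-r/2)+O_r(1)=2^{r/2}n+O_r(1)$, so the ratio tends to $2^{r/2}$; combined with the upper bound this gives $\lim_{n\to\infty}f_r(n)/n=2^{r/2}$, attained by $G_n=\Circ(n,r)$.

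Given Theorem~\ref{thm:main} this is essentially bookkeeping, and I do not expect a genuine obstacle. The two points needing a little care are (i) that cliques of size $t>r/2+1$ contribute only $o(n)$ to $f_r(n)$ --- this is exactly Theorem~\ref{thm:main} for those $t$, and is where the structural classification (Theorem~\ref{thm:pure}) is doing the real work --- and (ii) the $O_r(1)$ boundary terms in the count for $\Circ(n,r)$, which I would absorb via the crude estimate above rather than writing an exact formula.
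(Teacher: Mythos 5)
Your proposal is correct and takes essentially the same route as the paper: the paper also obtains the limit by summing the per-size results of Theorem \ref{thm:main} over the boundedly many clique sizes, noting (via Proposition \ref{prop:nottoobig}) that sizes above $r/2+1$ contribute only $o(n)$, with $\Circ(n,r)$ attaining the bound. The only cosmetic difference is that you count $k(\Circ(n,r))$ directly from the condition $\max S-\min S\le r/2$, whereas the paper sums its already-derived shelling-formula values $k_t(\Circ(n,r))=\binom{m-1}{t-1}\left(\frac{m}{t}+n-m\right)$.
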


Lastly, in Section \ref{section:odd}, we consider the odd case and give some open questions.
\section{Basic definitions and Preliminaries}\label{basicdef}
We start by introducing some basic notation and definitions for simplicial complexes.
\begin{notation} Given a subset $\{F_1,F_2,\ldots,F_l\}$ of $2^{[n]}$, denote the simplicial complex generated by the subset by $\gen{F_1,\ldots,F_l}$.

\end{notation}
\begin{definition}
Given any simplicial complex $\Delta$, the f-vector corresponding to $\Delta$ is the vector $(f_0,\ldots,f_{d-1})$, where $f_i$ denotes the number of faces of dimension $i$ in $\Delta$.
\end{definition}
\begin{definition}\label{def:pure} A simplicial complex $\Delta$ is called \textit{pure of dimension $d-1$} if $|F|=d$ for all facets $F$ of $\Delta$. 
\end{definition}

There is a slight notational inconvenience between the algebraic view of measuring a simplicial complex by its dimension and the graph theoretic view of measuring the underlying graph by its clique number, as these numbers differ by one. In this paper, we will favor the latter view.
\begin{notation}
Given a simplicial complex $\Delta$, denote by $\omega(\Delta)$ the size of the largest facet of $\Delta$, that is, $\dim(\Delta)+1$.
\end{notation}

While we will mostly be considering non-pure complexes, the concept of a pure complex is useful for the following formulation of shellability.
\begin{definition}A simplicial complex $\Delta$ is called \textit{shellable} if there is an ordering of the facets $F_1,\ldots, F_s$ such that
$\gen{F_i}\cap \gen{F_1,\ldots,F_{i-1}}$ is pure of dimension $|F_i|-1$, for all $i$. Any such ordering of the facets is called a \textit{shelling order}.
\end{definition}
In some cases, it will be useful to talk about the underlying graph of a simplicial complex at each step of the shelling.
\begin{notation}
We will denote the underlying graph at step $i$ by $G_i$, where $$G_i=G\left\lbrack\bigcup_{j=1}^iF_j\right\rbrack.$$
\end{notation}
The facets in a shellable complex come in one of two flavors.
\begin{definition}
Given a shelling order $F_1,F_2,\ldots F_l$, $F_i$ is called a \textit{vertex adding facet} if $|V(G_i)|-|V(G_{i-1})|=1$. Otherwise $|V(G_i)|=|V(G_{i-1})|$ and $F_i$ is called a \textit{structural facet}.
\end{definition}
\begin{definition} Given a simplicial complex $\Delta$ with a given shelling order $\mathbf{F}$, let $r_i(\Delta, \mathbf{F})$ be the number of distinct vertices $v_k$ such that there exists $j<i$ such that $F_i-F_j=v_k$. We will use $r_i(\Delta)$ when the shelling order is clear from context. 
\end{definition}

\begin{example}
In the figure below we give a graph whose clique complex is denoted by $\Delta$ with a given shelling ordering $\mathbf{F}$. Note that this complex is pure.
\begin{center}
\includegraphics[]{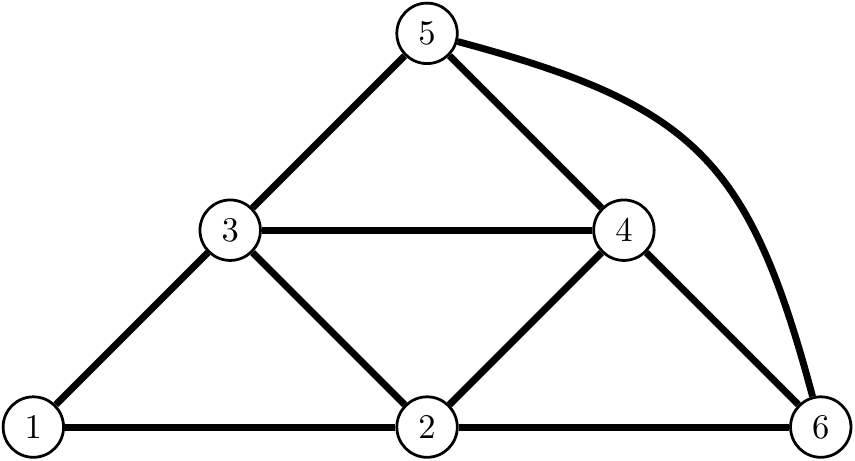}
\end{center}
$\mathbf{F}=(\{1,2,3\},\{2,3,4\},\{3,4,5\},\{2,4,6\},\{4,5,6\})$
\\$r_1(\Delta)=0,r_2(\Delta)=1,r_3(\Delta)=1,r_4(\Delta)=1,r_5(\Delta)=2$
\end{example}

While there can exist many different shelling orders for any given complex, the pure intersection requirement of shellability allows us to always ensure we can choose a shelling order with decreasing facet size.
\begin{lemma}[Rearrangement Lemma, Bj\"{o}rner \cite{bjorner}]
If $\Delta$ is a shellable complex, then there exists a shelling order for $\Delta$ such that $|F_1|,|F_2|,\ldots ,|F_s|$ is monotonically decreasing.
\end{lemma}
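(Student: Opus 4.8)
The plan is a ``bubble sort'' on facet sizes. Starting from an arbitrary shelling order $F_1,\ldots,F_s$, I would repeatedly locate an adjacent \emph{inversion} --- an index $i$ with $|F_i|<|F_{i+1}|$ --- transpose $F_i$ and $F_{i+1}$, and show that the result is again a shelling order. Define the inversion number of an ordering to be the number of pairs of facets that occur in strictly increasing order of size; transposing the adjacent pair $F_i,F_{i+1}$ reverses the relative order of that one pair of facets and of no other, so it decreases the inversion number by exactly one. Hence after finitely many transpositions no inversion remains, and the facet sizes are monotonically decreasing.

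So the entire content is the single transposition step. Let $d=|F_{i+1}|$ and let $F_1',\ldots,F_s'$ be the order obtained by swapping $F_i$ and $F_{i+1}$. For every $j\notin\{i,i+1\}$ we have $\{F_1',\ldots,F_{j-1}'\}=\{F_1,\ldots,F_{j-1}\}$ as sets, so the shelling condition at position $j$ is literally unchanged; only positions $i$ and $i+1$ need checking. The crucial observation is that, by the shelling condition at the old position $i+1$, the subcomplex $\gen{F_{i+1}}\cap\gen{F_1,\ldots,F_i}=\bigcup_{j\le i}\gen{F_{i+1}\cap F_j}$ is proper in $\gen{F_{i+1}}$ (proper because the facet $F_{i+1}$ lies in no earlier $F_j$) and each of its facets is a codimension-one face of $F_{i+1}$, hence of the form $F_{i+1}\cap F_j$ for some $j\le i$. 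Now $F_i\cap F_{i+1}$ is a face of this subcomplex --- it lies in both $\gen{F_i}$ and $\gen{F_{i+1}}$ --- but since $F_i$ is a facet with $|F_i|<d$ we have $|F_i\cap F_{i+1}|\le|F_i|-1<d-1$, so $F_i\cap F_{i+1}$ is strictly smaller than a ridge of $F_{i+1}$ and is therefore contained in a facet $F_{i+1}\cap F_j$ of the subcomplex; that facet cannot be $F_{i+1}\cap F_i$ (which would have $d-1$ elements), so $j<i$, and in particular $F_i\cap F_{i+1}\subseteq F_j\in\gen{F_1,\ldots,F_{i-1}}$. Two consequences finish the step. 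First, $F_i\cap F_{i+1}$ already belongs to $\gen{F_1,\ldots,F_{i-1}}$, so $\gen{F_{i+1}}\cap\gen{F_1,\ldots,F_{i-1}}=\gen{F_{i+1}}\cap\gen{F_1,\ldots,F_i}$, which gives the shelling condition at the new position $i$ (now occupied by $F_{i+1}$). Second, $\gen{F_i}\cap\gen{F_1,\ldots,F_{i-1},F_{i+1}}=\bigl(\gen{F_i}\cap\gen{F_1,\ldots,F_{i-1}}\bigr)\cup\gen{F_i\cap F_{i+1}}=\gen{F_i}\cap\gen{F_1,\ldots,F_{i-1}}$, which is pure of dimension $|F_i|-1$ by the original shelling; this gives the condition at the new position $i+1$ (now occupied by $F_i$).

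The step I expect to require the most care is the structural description of $\gen{F_{i+1}}\cap\gen{F_1,\ldots,F_i}$ invoked above --- that purity forces its facets to be exactly the $(d-1)$-element faces $F_{i+1}\setminus\{v\}$ that lie in it, each realized as some $F_{i+1}\cap F_j$ --- together with the elementary but load-bearing size inequality $|F_i\cap F_{i+1}|\le|F_i|-1<d-1$, which is exactly what prevents $F_i$ from contributing one of these faces and hence allows it to be removed from before $F_{i+1}$. The remaining manipulations are routine bookkeeping with generated subcomplexes, and since the transposition count bounds the process there is no need for induction on the size of $\Delta$. (The same reasoning incidentally shows an inversion cannot occur at $i=1$, so a missing predecessor is never an issue.)
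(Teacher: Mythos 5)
Your proof is correct, and it is worth noting that the paper gives no argument of its own here: the lemma is quoted with a citation to Bj\"orner, so the only comparison available is with the standard proof in the cited literature. Your adjacent-transposition (``bubble sort'') scheme works: the inversion-counting termination argument is sound, positions other than $i,i+1$ see the same facet and the same set of predecessors, and the load-bearing step is exactly the one you flag --- purity of $\gen{F_{i+1}}\cap\gen{F_1,\ldots,F_i}$ in faces of size $|F_{i+1}|-1$, together with $|F_i\cap F_{i+1}|\le |F_i|-1<|F_{i+1}|-1$ (using that distinct facets are incomparable), forces $F_i\cap F_{i+1}$ to lie inside some $F_j$ with $j<i$, after which both new conditions at positions $i$ and $i+1$ collapse to the old conditions at positions $i+1$ and $i$ respectively. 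Your parenthetical observation that an inversion at $i=1$ cannot occur in a valid shelling closes the only boundary case, and the case of an empty $F_i\cap F_{i+1}$ is harmless since the empty face lies in $\gen{F_1,\ldots,F_{i-1}}$ anyway. The usual published proof instead performs the rearrangement in one pass --- list the facets in weakly decreasing size, keeping the original relative order within each size class --- and verifies the shelling condition for the reordered sequence directly via essentially the same containment argument; your version factors that global rearrangement into elementary swaps, which costs you a (trivial) termination count but makes each verification purely local and avoids any bookkeeping about where a facet ends up in the final order. Both arguments hinge on the same observation you isolate: a smaller facet preceding a larger one can be postponed past it, because its intersection with the larger facet is too small to be one of the required codimension-one faces and is therefore already contained in the earlier part of the complex.
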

Unless otherwise stated, we will assume that our shelling orders are of this type. 

Due to the algebraic connection between simplicial complexes and Stanley-Reisner rings, some properties of Betti numbers can be used to find the sum of the $f$-vector by combining results of Herzog and Hibi \cite[Proposition 8.2.5]{herzog} and Sharifan and Varbaro \cite[Corollary 2.7]{sharifan}. We present a combinatorial argument for this result in the case where $\Delta$ is shellable.
\begin{prop}\label{prop:tformula}
Suppose that $\Delta$ is a shellable simplicial complex with shelling order $\mathbf{F}=F_1, F_2, \ldots F_\ell$ and $\omega(\Delta)=s$. Then with $r_i=r_i(\Delta,\textbf{F})$,

$$ f_{t+1}=\sum_{i=1}^\ell\binom{|F_i|-r_i}{t-r_i} .$$
\end{prop}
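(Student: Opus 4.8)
The plan is to induct on the length $\ell$ of the shelling order, tracking how the face count $f_{t+1}$ changes as we add one facet at a time. Let $\Delta_i = \gen{F_1,\ldots,F_i}$ be the subcomplex after $i$ steps, and let $f_{t+1}^{(i)}$ denote its number of $(t+1)$-faces (equivalently, $t$-cliques of the underlying graph if we are in the flag setting, but we only need the simplicial statement). The base case $i=1$ is immediate: $\Delta_1 = \gen{F_1}$ has $\binom{|F_1|}{t+1}$ faces of dimension $t$, and since $r_1 = 0$ the summand $\binom{|F_1|-r_1}{t-r_1} = \binom{|F_1|}{t}$. (Here one must be careful about the index shift between $f_{t+1}$ counting faces of \emph{dimension} $t$, i.e.\ cardinality $t+1$, and the binomial $\binom{|F_i|}{t}$; I would state the identity in the form $f_{t+1}=\sum\binom{|F_i|-r_i}{t-r_i}$ exactly as given and verify the shift once at the start.)

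For the inductive step, the key point is to count precisely the new faces of dimension $t$ introduced when we pass from $\Delta_{i-1}$ to $\Delta_i$ by adjoining the facet $F_i$. The new faces are exactly the subsets $A \subseteq F_i$ with $|A| = t+1$ that do not already lie in $\Delta_{i-1}$. By the shellability hypothesis, $\gen{F_i} \cap \Delta_{i-1}$ is a pure complex of dimension $|F_i| - 2$, so it is generated by the $(|F_i|-1)$-subsets of $F_i$ that lie in $\Delta_{i-1}$; I claim these generating maximal faces are exactly the sets $F_i \setminus \{v_k\}$ where $v_k$ ranges over the $r_i$ distinct vertices for which some earlier facet $F_j$ satisfies $F_i \setminus F_j = \{v_k\}$. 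Granting this, a subset $A \subseteq F_i$ lies in $\Delta_{i-1}$ if and only if $A$ is contained in one of these $r_i$ facets $F_i \setminus \{v_k\}$, i.e.\ if and only if $A$ avoids at least one of the $r_i$ special vertices. So the number of \emph{new} $(t+1)$-subsets is the number of $(t+1)$-subsets of $F_i$ that contain \emph{all} $r_i$ special vertices, which is $\binom{|F_i| - r_i}{(t+1) - r_i} = \binom{|F_i|-r_i}{t-r_i}$. Adding this to the inductive hypothesis for $\Delta_{i-1}$ gives the formula for $\Delta_i$, completing the induction.

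The main obstacle is the claim in the previous paragraph: that the maximal faces of $\gen{F_i}\cap\Delta_{i-1}$ are precisely the codimension-one subsets $F_i\setminus\{v_k\}$ indexed by the $r_i$ vertices in the definition of $r_i(\Delta,\mathbf{F})$. One direction is easy: if $F_i \setminus F_j = \{v_k\}$ for some $j < i$, then $F_i \setminus \{v_k\} = F_i \cap F_j \subseteq F_j \in \Delta_{i-1}$, so $F_i\setminus\{v_k\}$ is a face of the intersection of the stated codimension. For the other direction I would argue that, since $\gen{F_i}\cap\Delta_{i-1}$ is pure of dimension $|F_i|-2$, every maximal face $M$ of it has $|M| = |F_i| - 1$, so $M = F_i \setminus \{v\}$ for a unique $v$; and $M$ being in $\Delta_{i-1}$ means $M \subseteq F_j$ for some $j < i$, whence $F_i \setminus F_j \subseteq \{v\}$. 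The remaining subtlety is the degenerate possibility $F_i \subseteq F_j$, i.e.\ $F_i\setminus F_j=\emptyset$; but in a shelling with facets listed by nonincreasing size (the Rearrangement Lemma, which we have assumed) no facet is contained in an earlier one, so $F_i \setminus F_j$ is a single vertex exactly. Thus $v$ is one of the counted vertices $v_k$, proving the claim. Care is also needed to confirm that distinct maximal faces give distinct vertices $v_k$ and conversely, so that the count "$r_i$" matches the number of these facets exactly; this follows because $F_i\setminus\{v_k\}$ determines $v_k$ and vice versa.
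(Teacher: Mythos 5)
Your proposal is essentially the paper's own proof: you identify the maximal faces of $\gen{F_i}\cap\gen{F_1,\ldots,F_{i-1}}$ as the sets $F_i\setminus\{v_k\}$ for the $r_i$ special vertices, observe that the new faces contributed by $F_i$ are exactly those containing all of these vertices, and count them by $\binom{|F_i|-r_i}{\,\cdot\,-r_i}$, just packaged as an induction on the shelling length rather than a sum over shelling steps. The only blemish is the written equality $\binom{|F_i|-r_i}{(t+1)-r_i}=\binom{|F_i|-r_i}{t-r_i}$, but this off-by-one is an artifact of the indexing in the proposition's statement itself (the formula is later used to count cliques of cardinality $t$), and you explicitly flagged the shift.
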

\begin{proof}
We will show this formula is correct by showing that when $F_i$ is added, it adds $\binom{|F_i|-r_i}{t-r_i}$ faces of size $t$. This formula is clear when $F_i$ is just an edge or a vertex, therefore we will consider only larger faces. 

Note that $\gen{F_i}\cap \gen{F_1,\ldots F_{i-1}}=\gen{H_1,H_2,\ldots H_m}$, where $|H_i|=|F_i|-1$. Since this is a decreasing shelling order, each $H_j$ is contained in a previous facet. Hence, $F_i\setminus H_j=v_j$ for some vertex $v_j \in F_i$. Then $r_i=m$ and let $R_i=\{F_i\setminus H_j: $ for $j\leq m$\}.

If $r_i=|F_i|$, then we are only adding the face $F_i$ itself, which matches our formula. If $r_i=0$, then there is no intersection with previous faces, so adding $F_i$ matches our formula. Now suppose $0<r_i<|F_i|$. 

When viewed as a face, $R_i$ is not in $\gen{F_1,\ldots F_{i-1}}$. If it were, then $R_i \subset H_j$ for some $j$. Then letting $v_j= F_i\setminus H_j$, $v_j$ must be in $R_i$, a contradiction. So $R_i$ is not a face. For any $A\subseteq F_i$ with $R_i\nsubseteq A$, $A \in \gen{F_1,\ldots F_{i-1}}$. To see this consider $v_j \in R_i\setminus A$ corresponding to $H_j$. Then $A \subseteq H_j$.

Therefore, when $F_i$ is added, every face of $F_i$ that doesn't contain $R_i$ is also added. There are $\binom{|F_i|-r_i}{t-r_i}$ of these. 
\end{proof}
Summing over all face sizes gives the following corollary.
\begin{cor}\label{prop:formula}
Suppose that $\Delta$ is a shellable simplicial complex with shelling order $F_1, F_2, \ldots F_l$ and $\omega(\Delta)=s$. Then with $r_i=r_i(\Delta)$, 

$$|\Delta|= \sum_{i=0}^s f_i=\sum_{i=1}^l 2^{|F_i|-r_i}.$$
\end{cor}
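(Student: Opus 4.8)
The plan is to deduce Corollary~\ref{prop:formula} from Proposition~\ref{prop:tformula} by summing the per-dimension formula over all face sizes. Writing $|\Delta|$ for the total number of faces, we have $|\Delta| = \sum_t f_t$, and Proposition~\ref{prop:tformula} expresses each term as a sum over the facets, with facet $F_i$ contributing $\binom{|F_i|-r_i}{t-r_i}$ faces of size $t$. Since both the sum over face sizes $t$ and the sum over facets $i$ are finite, I can interchange them to get
\begin{equation*}
|\Delta| \;=\; \sum_{i=1}^{\ell}\ \sum_{t}\binom{|F_i|-r_i}{t-r_i}.
\end{equation*}
The whole statement therefore reduces to evaluating the inner sum for a single fixed facet.

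For that inner sum I would set $m = |F_i|-r_i$ and reindex by $j = t - r_i$; then $\sum_t \binom{|F_i|-r_i}{t-r_i} = \sum_{j=0}^{m}\binom{m}{j} = 2^{m} = 2^{|F_i|-r_i}$ by the binomial theorem, since $\binom{m}{j}$ vanishes for $j$ outside $\{0,\dots,m\}$. Summing over $i$ gives exactly $\sum_{i=1}^{\ell}2^{|F_i|-r_i}$. This is consistent with the two boundary cases appearing in the proof of Proposition~\ref{prop:tformula}: a facet with $r_i = |F_i|$ contributes only itself, i.e.\ $2^{0}=1$, while a facet with $r_i = 0$ contributes all $2^{|F_i|}$ of its subsets. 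Equivalently, one can avoid quoting the per-dimension formula altogether and simply rerun the facet-by-facet argument directly: at step $i$ the faces that are genuinely new are precisely the subsets $A \subseteq F_i$ with $R_i \subseteq A$, of which there are $2^{|F_i|-r_i}$, and these collections are pairwise disjoint across the steps of the shelling and exhaust $\Delta$.

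There is no real obstacle here; the mathematical content is carried by Proposition~\ref{prop:tformula}, and what remains is a one-line row-sum identity. The only points worth a moment's attention are bookkeeping: one should check that ``summing over all face sizes'' genuinely ranges over every dimension up to $\omega(\Delta)=s$ so that each face of $\Delta$ is counted once and only once, and one should fix the convention for whether the empty face is included in $|\Delta|$ — the right-hand side $\sum_i 2^{|F_i|-r_i}$ automatically accounts for it through the initial facet, where $r_1 = 0$. With those conventions settled, the interchange of the two finite sums is immediate and the proof is complete.
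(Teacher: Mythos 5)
Your proof is correct and follows the same route as the paper, which simply sums the formula of Proposition~\ref{prop:tformula} over all face sizes; your interchange of the two finite sums and the binomial identity $\sum_j \binom{m}{j}=2^m$ is exactly the intended argument. Your remark about the empty-face convention is a reasonable bookkeeping observation, but nothing more is needed.
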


In a clique complex, structural facets have the same effect on the underlying graph, no matter what their size is.
\begin{lemma}\label{lem:structfacets}
Let $G$ be a graph such that $K(G)$ is a shellable clique complex and let $\textbf{F}$ be a given shelling order. If $F_i$ is a structural facet, then $|E(G_i)|-|E(G_{i-1})|=1$.
\end{lemma}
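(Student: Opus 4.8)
The plan is to analyze what a structural facet $F_i$ contributes to the underlying graph, using the pure-intersection property of shellability together with the flag (clique) structure of $K(G)$.

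First I would set up the intersection decomposition exactly as in the proof of Proposition~\ref{prop:tformula}: write $\gen{F_i}\cap\gen{F_1,\dots,F_{i-1}}=\gen{H_1,\dots,H_m}$ where each $|H_j|=|F_i|-1$, since the intersection is pure of dimension $|F_i|-2$ (a structural facet adds no new vertex, so its intersection with the previous complex cannot be all of $F_i$, and if the intersection had dimension $|F_i|-1$ it would equal $\gen{F_i}$ forcing $F_i$ to already be a face). Because the shelling order has decreasing facet sizes, each $H_j$ lies in some earlier facet, so $F_i\setminus H_j=\{v_j\}$ is a single vertex, and $R_i=\{v_1,\dots,v_m\}$ with $|R_i|=r_i=m$. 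Since $F_i$ is structural, every vertex of $F_i$ already appears in $G_{i-1}$, i.e. in some earlier facet.

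The key step is to show $r_i=|F_i|-1$, i.e. $m=|F_i|-1$, so that $R_i$ misses exactly one vertex $w$ of $F_i$. Suppose instead $r_i\le |F_i|-2$, so there are at least two vertices $w_1,w_2\in F_i\setminus R_i$. By the argument in Proposition~\ref{prop:tformula}, any subset $A\subseteq F_i$ with $R_i\not\subseteq A$ is already a face of $\gen{F_1,\dots,F_{i-1}}$; in particular the edge $\{w_1,w_2\}$ is already present (it omits all of $R_i$), and indeed every pair inside $F_i$ that omits some vertex of $R_i$ is already an edge. More strongly, for each $v_j\in R_i$ the set $H_j=F_i\setminus\{v_j\}$ is a clique already present in $G_{i-1}$, so all edges of $F_i$ not incident to $v_j$ are present; ranging over $j$, an edge $\{a,b\}\subseteq F_i$ fails to be present only if every $v_j\in R_i$ equals $a$ or $b$, which for $m\ge 2$ (hence at least two distinct $v_j$) is impossible. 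Thus when $m\ge 2$, $G[F_i]$ is already complete in $G_{i-1}$, so $F_i$ is already a face of the clique complex $K(G_{i-1})\subseteq\gen{F_1,\dots,F_{i-1}}$ — contradicting that $F_i$ is a facet being added. Hence $m=|F_i|-1$ (the case $m=|F_i|$ is likewise excluded, as that would make $\gen{F_i}\cap\gen{F_1,\dots,F_{i-1}}=\gen{F_i}$).

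It remains to count the new edges. With $r_i=|F_i|-1$, write $F_i\setminus R_i=\{w\}$. Every edge of $F_i$ not incident to $w$ omits at most... actually every such edge $\{a,b\}$ with $a,b\ne w$ has $a,b\in R_i$, and since $|R_i|=|F_i|-1\ge 2$ we can pick $v_j\in R_i$ with $v_j\notin\{a,b\}$ (possible unless $|F_i|-1\le 2$, which I handle directly for small facets), so $\{a,b\}\subseteq H_j$ is already present. The only edges incident to $w$ that omit some $v_j$ are those $\{w,v_j\}$, and these lie in $H_{j'}$ for any $j'\ne j$, so they too are already present — \emph{except} there is exactly one pair that lies in no $H_j$, namely... here I must be careful: each $H_j=F_i\setminus\{v_j\}$ contains $w$ and all of $R_i$ except $v_j$, so the edge $\{w,v_j\}$ is contained in $H_{j'}$ whenever $j'\ne j$, hence is old as soon as $m\ge 2$. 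Then by the Proposition~\ref{prop:tformula} count with $t=1$, the number of $1$-faces (edges) added by $F_i$ is $\binom{|F_i|-r_i}{2-r_i}$ — but $r_i=|F_i|-1$, so this formula as literally written needs the convention that $\binom{|F_i|-r_i}{2-r_i}$ counts subsets of the $(|F_i|-r_i)$-element complement of $R_i$ of size $2-r_i$ that are then unioned with $R_i$; with $|F_i|-r_i=1$ the new faces of size $2$ are exactly the sets $R_i\cup\{x\}$ of size $|R_i|+1=|F_i|$ when $|R_i|=1$, i.e. precisely when $|F_i|=2$, giving one new edge, and for $|F_i|\ge 3$ one checks directly from the above that the unique new edge is $\{v_j,v_{j'}\}$ is never new...

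Let me restate the clean version: the cleanest route is simply to invoke Proposition~\ref{prop:tformula} at $t=1$. That proposition says $F_i$ adds $\binom{|F_i|-r_i}{1-r_i}$ new faces of size $1$... no — faces of \emph{size} $t$, and there $t$ indexes $f_{t+1}$, so edges (size $2$, dimension $1$) are $f_2=\sum\binom{|F_i|-r_i}{2-r_i}$. The term for a structural $F_i$ is $\binom{|F_i|-r_i}{2-r_i}$; having shown $r_i=|F_i|-1$, this equals $\binom{1}{2-(|F_i|-1)}=\binom{1}{3-|F_i|}$, which is $1$ when $|F_i|\in\{2,3\}$ and $0$ for $|F_i|\ge 4$ — so that route gives the wrong answer for large facets and therefore my claim $r_i=|F_i|-1$ must be re-examined: in fact for a structural facet in a \emph{clique} complex the correct statement is that $R_i$ has size $|F_i|-2$, with the two vertices outside $R_i$ being the two endpoints of the single new edge, and all other edges of $F_i$ forced present by flagness. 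So the key step is instead: \textbf{show $r_i=|F_i|-2$}, the two omitted vertices $w_1,w_2$ form the one new edge (it lies in no $H_j$), while every other edge of $F_i$ omits some $v_j$ hence is old; and $m\le|F_i|-3$ is impossible because then even $\{w_1,w_2\}$ would be old (it omits all of $R_i$ only if... ) — no, $\{w_1,w_2\}$ always omits all of $R_i$. The real obstruction, and the heart of the argument, is showing the intersection $\gen{H_1,\dots,H_m}$ together with flagness forces exactly \emph{one} missing edge; I expect this case analysis — pinning down that a structural facet in a flag complex contributes precisely one edge — to be the main obstacle, and I would resolve it by arguing that $F_i$ being a new facet means $G_{i-1}[F_i]$ is not complete, so at least one edge is new; that the non-edges of $G_{i-1}[F_i]$ must all be "destroyed" by a single $H_j$ is forced because the shelling intersection is generated by the $H_j=F_i\setminus\{v_j\}$ and adding $F_i$ completes $F_i$; and a short counting argument shows two missing edges would require the intersection to have a facet of dimension below $|F_i|-2$, violating purity.

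Then the conclusion $|E(G_i)|-|E(G_{i-1})|=1$ follows immediately. The main obstacle is the flag-complex-specific claim that a structural facet kills exactly one non-edge; the degree bound $\Delta(G)\le r$ is not needed here, only shellability and the clique-complex hypothesis.
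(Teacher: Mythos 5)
Your proposal does not converge to a proof, and two of its pivotal claims are false. First, you repeatedly use the containment $K(G_{i-1})\subseteq\gen{F_1,\ldots,F_{i-1}}$ (``if $G_{i-1}[F_i]$ is complete then $F_i$ is already a face, contradiction''). But a clique of $G_{i-1}$ is only guaranteed to lie in \emph{some} facet of $K(G)$, which may be $F_i$ itself or a later facet; a maximal clique all of whose edges are already present need not be a face of the complex generated by the earlier facets. So neither your first ``contradiction'' nor your later assertion that ``$F_i$ being a new facet means $G_{i-1}[F_i]$ is not complete'' is valid. Indeed a structural facet can add zero edges --- the last facet in any shelling of the clique complex of the octahedron $K_{2,2,2}$ adds no vertex and no edge --- which is why the paper's own proof only rules out \emph{two or more} new edges; the ``at least one'' half cannot be obtained by your route. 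Second, your corrected description of $R_i$ is backwards: if a structural facet does create a new edge $xy$, then no face of $\gen{F_1,\ldots,F_{i-1}}$ contains both $x$ and $y$, so every facet of the intersection is $F_i\setminus\{x\}$ or $F_i\setminus\{y\}$, and purity (every vertex of $F_i$ is old, lies in the intersection, hence in one of its top-dimensional faces) forces both to occur; thus $r_i=2$ and $R_i=\{x,y\}$ is precisely the new edge, not $r_i=|F_i|-2$ with the new edge outside $R_i$. Your ``impossible for $m\ge 2$'' step is where this went astray: with $m=2$ the edge $\{v_1,v_2\}=R_i$ is exactly the one that can be missing.

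The actual content of the lemma --- that adding a structural facet cannot create two new edges --- is exactly the step you defer to ``a short counting argument,'' and you never supply it. The paper's proof is that argument: if $F_i$ created new edges $xy$ and $uv$, then every facet $H_j$ of $\gen{F_i}\cap\gen{F_1,\ldots,F_{i-1}}$ omits at least one endpoint of each new edge, and a two-case check (the new edges share a vertex, or they are disjoint) produces an $H_j$ omitting at least two vertices of $F_i$, contradicting $|H_j|=|F_i|-1$. Since this purity case analysis is the heart of the lemma and your proposal replaces it with an expectation, the proposal has a genuine gap rather than being an alternative proof. (Your closing remark that the degree bound $\Delta(G)\le r$ is not needed is correct and agrees with the paper.)
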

\begin{proof}
Suppose structural facet $F_i$ is added and this adds edges $xy$ and $uv$. Let $\gen{F_i}\cap \gen{F_1,\ldots F_{i-1}}=\gen{H_1,H_2,\ldots H_s}$, where $|H_i|=|F_i|-1$ as given by the definition of shellability. Then $\{x,y\}\cap H_i\leq 1$ and $\{u,v\}\cap H_i \leq 1$ for all $i$. 

If $|\{u,v,x,y\}|=3$, then without loss of generality $u=x$. As $F_i$ is not a vertex-adding facet, $x \in H_j$ for some $j$, but since $F_i$ adds edges $xy,xv, H_j\cap\{y,v\}=\emptyset$. Thus $|F_i\setminus H_i|\geq 2$, so $|H_i|<|F_i|-1$, a contradiction.

If $uv\cap xy = \emptyset$, then $|\{u,v,x,y\}\cap H_i|\leq 2$ for all $i$, so $|F_i\setminus H_i|\geq 2$, giving the same contradiction as above. So a structural facet must add exactly one edge.
\end{proof}
 
For graphs with pure complexes of dimension $m-1$ , it will be useful to define an auxiliary graph known as the $K(G,m)$ graph.
\begin{definition}
Let $G$ be a graph with pure clique complex $K(G)$ of dimension $m-1$. Then the facets of $K(G)$ are cliques of size $m$. We define the $K(G,m)$ graph to be the graph given by 
$$V(G)=\{F_i: i \text{ a facet}\}$$
$$E(G)=\{F_iF_j: |F_i\cap F_j|=m-1\}$$
\end{definition}

When the clique complex is shellable, this graph can be used to eliminate some possible shelling orders and determine ``$K_m$ connectivity'' of a graph, as we do in Section \ref{section:even}.
\section{The Even Case}\label{section:even}
Throughout the rest of this paper, we will consider a graph $G$ with $\Delta(G)\leq r$ and $K(G)$ shellable. In this section, we consider the case when $r$ is even. 

The natural intuition for constructing simplicial complexes with maximum possible number of faces is to include facets as large as allowed by the degree bound. If $n\leq r$, the best graph for maximizing cliques is $K_n$. It is trivially shellable and the maximum degree condition doesn't even come into play. For $n>r$, consider the graph consisting of a $K_{r+1}$ and $n-r-1$ isolated vertices. This gives a lower bound of $2^{r+1} + n-r$ on the maximum number of cliques. For certain $n$, this will be the extremal graph, but as $n$ increases, we are unable to build from this graph, as all of the vertices in our first component have maximum degree and no additional facets of size $2$ or more can be added. For $n>2^{r+2}/r$, any $r$-regular graph on $n$ vertices will have more edges (let alone faces) than this. Despite adding many faces at once, a $K_{r+1}$ simply limits our choices too much to be asymptotically desirable.

Let's step one dimension lower. If we start with a $K_{r}$, we do have the flexibility to add a second $K_{r}$, say without loss of generality $F_1=\{1,2,\ldots,r\}$ and $F_2=\{2,3,\ldots,r+1\}$. At this point, only vertices $1$ and $r+1$ have available degree and if they were connected, we would have instead started with a $K_{r+1}$. Since we can always take a shelling order to have decreasing facet sizes, at this point our only options for remaining facets are single edges and vertices. If the rest of the graph is expanded into a triangle-free $r$-regular graph, we would have slightly less than $2^r + 2^{r-1} + nr/2+ n$ cliques. It is clear that this eventually outpaces the $K_{r+1} \cup E_{n-r-1}$ graph, but any $r$-regular graph with a linear number of triangles in $n$ will eventually outpace this.

It seems that each dimension lower we go allows us to add more cliques of that size, but in exchange those facets contribute fewer cliques. The following proposition shows that in fact most clique counts are bounded independently of $n$. As we build our complex from the shelling order, maximum degree requirements on the underlying graph limit the total degree of vertices of $G[F_1,\ldots,F_i]$ to $|G[F_1,\ldots,F_i]|r$. The difference $$|G[F_1,\ldots,F_i]|r- \sum_{v \in G[F_1,\ldots,F_i]}d(v)$$ will be referred to as \textit{free degree}. If during the shelling process we run out of free degree, there is an obvious contradiction, as this would mean a vertex violates the maximum degree condition.   
\begin{prop}\label{prop:nottoobig}
For a graph $G$, with $\Delta(G)\leq r$ and $K(G)$ shellable, there are $O(1)$ (in terms of $n$) facets (hence faces) larger than $r/2 + 1$.
\end{prop}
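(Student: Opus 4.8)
The plan is to exploit the "free degree" bookkeeping sketched in the paragraph preceding the proposition. Fix a decreasing shelling order $F_1, F_2, \ldots, F_\ell$ and track the quantity $D_i = |V(G_i)| \cdot r - \sum_{v \in V(G_i)} d_{G_i}(v)$, the free degree after step $i$. Since $\Delta(G) \le r$, we must have $D_i \ge 0$ for every $i$, and $D_\ell \ge 0$ in particular. The strategy is to show that every facet of size larger than $r/2 + 1$ \emph{consumes} a bounded-below positive amount of free degree when it is added, while the total free degree ever created is $O(1)$ in $n$; together these force the number of large facets to be $O(1)$.

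First I would analyze how $D_i$ changes at each step. A vertex-adding facet $F_i$ introduces one new vertex, contributing $r$ to the free-degree budget, and then adds $|F_i| - 1$ edges incident to that new vertex (and edges among old vertices only if $r_i$ is large, but in the decreasing order the new vertex lies in exactly the faces not containing $R_i$), so its net effect on $D_i$ is $r - 2(\text{edges added})$; using $\omega(\Delta) = s$ and $|F_i| \le s$, this is at least $r - 2(s-1)$, which can be negative, so vertex-adding facets are not where the savings come from directly. The key observation is instead about \emph{structural} facets: by Lemma \ref{lem:structfacets}, a structural facet adds exactly one edge, hence decreases $D_i$ by exactly $2$ and adds no new vertex. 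So structural facets each eat $2$ units of free degree. Meanwhile, by Proposition \ref{prop:tformula}/Corollary \ref{prop:formula}, a facet $F_i$ with $|F_i| - r_i \ge 2$ — in particular any facet of size $> r/2+1$ that is not essentially absorbed — contributes genuinely new faces; I would show that a facet of size $q > r/2 + 1$, when added, either is a structural facet (costing $2$ free degree directly) or, if vertex-adding, forces the new vertex to have degree $q - 1 > r/2$ immediately, and since the maximum degree is $r$ this vertex can participate in at most a bounded number of further edges — and more to the point, a careful count shows the total free degree budget available is controlled.

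The cleanest route, which I would pursue, is: let $L$ be the number of facets of size $> r/2 + 1$. Using the decreasing shelling order, once all facets of size $> r/2+1$ have been processed, say after step $m$, consider $G_m$. Each such large facet $F_i$ has $|F_i| \ge r/2 + 2$; I claim $\sum_{v \in V(G_m)} d_{G_m}(v) \ge c \cdot L$ for an explicit constant $c = c(r) > 0$, because the large facets cannot overlap too freely (their pairwise intersections are constrained by the maximum degree $r$, since a vertex of a size-$(r/2+2)$ clique already has degree $r/2+1$ there and can lie in at most a bounded number $O(r)$ of such large cliques). Combined with $|V(G_m)| \cdot r \ge \sum_v d_{G_m}(v)$, we get $r |V(G_m)| \ge cL$; but $|V(G_m)|$ is itself bounded in terms of $L$ (each large facet adds at most $s \le r+1$ vertices, and $G_m$ is connected, so $|V(G_m)| \le (r+1)L + O(1)$), which is not yet a contradiction on its own — so the real leverage must be that large facets create \emph{negative} net free degree. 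Precisely: adding a facet of size $q = r/2 + 1 + j$ (with $j \ge 1$), whether vertex-adding or structural, changes $D_i$ by at most $r - 2(q - \max(r_i,1)) \le r - 2(q-1)$ when it's worth counting, and one shows that over a maximal run of such facets the cumulative change is $\le$ (something) $- \Omega(L)$; since $D$ stays nonnegative and starts and ends at $O(1)$, we conclude $L = O(1)$. Finally, the count of faces larger than $r/2+1$ follows since each facet has at most $2^{r+1}$ faces.

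The main obstacle I anticipate is making the "large facets cannot overlap too much" step precise enough to extract a \emph{positive per-facet} free-degree cost: one must rule out configurations where many large facets share most of their vertices (so that $|V(G_i)|$ barely grows while many faces are added). This is exactly where the maximum degree bound bites — a vertex in a clique of size $r/2 + 2$ already uses up more than half its degree budget on that clique, so it can belong to only $O(1)$ cliques of size $> r/2+1$ — but turning this local statement into a global linear lower bound on consumed free degree, uniformly over all shelling orders and all such graphs $G$, will require the careful case analysis on whether each large facet is vertex-adding or structural, and bounding $r_i$ from below in the vertex-adding case. I expect the bookkeeping to be the bulk of the work, with the conceptual content being entirely contained in Lemma \ref{lem:structfacets} and the free-degree invariant.
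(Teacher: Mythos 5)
Your proposal is essentially the paper's own argument: with a decreasing shelling order, track the free degree $D_i$, which is nonnegative throughout, equals $|F_1|(r-|F_1|+1)=O_r(1)$ after the first facet, and strictly decreases with every subsequent facet of size $q>r/2+1$ --- by exactly $2q-r-2\ge 1$ if it is vertex-adding (the new vertex contributes $r$ while its $q-1$ new edges cost $2(q-1)$, and $r_i=1$ automatically here) and by exactly $2$ if it is structural (Lemma \ref{lem:structfacets}) --- and since the decreasing order places all large facets before any small, free-degree-replenishing ones, their number is bounded in terms of $r$ alone. The overlap/cluster detour and the anticipated need to control $r_i$ or how large facets share vertices are unnecessary, and two slips should be removed in a final write-up: the claim that ``the total free degree ever created is $O(1)$ in $n$'' is false globally (small vertex-adding facets create $\Theta(n)$ free degree; only the prefix of large facets matters), and the bound $r-2(q-\max(r_i,1))\le r-2(q-1)$ is reversed when $r_i\ge 2$, though for structural facets the exact change $-2$ is all that is needed.
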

\begin{proof}
Fix $r$ and let $\ell\leq r+1$ be the size of the largest facet. We start with a $K_\ell$ and $\ell$ vertices of degree $\ell-1$. Each vertex adding facet of size $m$ introduces a vertex of degree $m-1$ and increases the degree of $m-1$ other vertices by 1. We've therefore ``lost'' $m-1$ free degree and gained $r-(m-1)$ free degree.
\begin{align}
r-m+1-(m-1)=r+2-2m \tag{*}
\end{align}
which is negative when $r/2 +1 < m$. So each vertex adding facet larger than $r/2+1$ strictly decreases the amount of free degree left. Adding structural facets only compounds this problem, since it must add degree to at least two vertices and introduces no new ones. Since we start with a finite amount of free degree, adding facets larger than $r/2+1$ can only be done a fixed number of times dependent on $r$ but not $n$. 
\end{proof}
Because of Proposition \ref{prop:nottoobig}, the clique number of a graph can be misleading, as it doesn't capture the dimension of ``most'' of the graph as $n$ gets large. For convenience, we will let $m=r/2+1$ for the remainder of the paper. We show that in graphs with enough $K_{m}$'s, the dimension of $K(G)$ actually does correspond to what the dimension ``should'' be.

\begin{prop}\label{prop:toomanycooks}
Let $r=2k$. Suppose $K_m(G)=t$. If $t\geq t_0=t_0(r)$, then $\omega(G) = m$.
\end{prop}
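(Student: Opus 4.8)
The plan is to show that if $G$ has many $K_m$'s but $\omega(G) > m$ (where $m = r/2+1$), then we reach a contradiction with the free degree accounting of Proposition \ref{prop:nottoobig}. Recall from equation (*) that every vertex-adding facet of size $\ell > m$ \emph{strictly decreases} free degree (by at least $2\ell - r - 2 \geq 1$), while every structural facet also decreases it. We start the shelling, via the Rearrangement Lemma, with a decreasing shelling order, so all facets of size $> m$ appear first, followed by all the facets of size exactly $m$, then edges and vertices. The key point is that the facets of size exactly $m$ are the only ones that \emph{add} free degree: by (*), a vertex-adding facet of size $m$ changes free degree by $r + 2 - 2m = 0$, so it is free-degree-neutral, and this is the best any facet can do.

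First I would bound the number of $K_m$ facets. Suppose $\omega(G) = \ell_0 > m$. We begin with a $K_{\ell_0}$, giving $\ell_0$ vertices of degree $\ell_0 - 1$, hence initial free degree $\ell_0(r - \ell_0 + 1)$, which is at most some constant $C_0 = C_0(r)$ (maximized around $\ell_0 = (r+1)/2$). Now: every facet of size $> m$ strictly decreases free degree; every structural facet strictly decreases it; every $K_m$ facet leaves it unchanged (if vertex-adding) or decreases it (if structural). Since free degree must stay nonnegative throughout, and there is at least one facet of size $\ell_0 > m$ beyond the initial one (or rather, the initial facet itself already costs us relative to the ideal), the total ``deficit budget'' is bounded by $C_0$. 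Hence the number of facets that are \emph{not} free-degree-neutral is $O(1)$ in $n$. But this does \emph{not} directly bound the number of $K_m$'s — there could be arbitrarily many free-degree-neutral $K_m$'s. So the free-degree argument alone bounds the number of ``bad'' (large or structural) facets, not the total.

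The actual contradiction must come from combining this with the hypothesis $K_m(G) = t$ being large. Here is the mechanism: each vertex-adding $K_m$ facet $F$ introduces one new vertex $v$ of degree $m-1 = r/2$, so $v$ has exactly $r/2$ free degree remaining; and it raises the degree of $m-1$ old vertices by $1$ each. If we have $\omega(G) > m$, the presence of the large initial facet forces the shelling to occur in a ``tree-like'' or ``path-like'' way among the $K_m$'s (this is where the $K(G,m)$ auxiliary graph from Section \ref{basicdef} enters), and I expect the argument is: a $K_{\ell_0}$ with $\ell_0 > m$, together with the degree bound, localizes all the ``action'' near that initial facet, so that only $O(1)$ many $K_m$'s can ever be attached. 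More precisely, I would argue that every $K_m$ facet that attaches to the complex must do so along an $(m-1)$-face, and in a decreasing shelling the vertices available for such attachments are controlled by the free degree, which is capped at $C_0$; each new $K_m$ consumes free degree from the \emph{old} vertices it attaches to even though it replenishes $r/2$ on its new vertex — but that new vertex's free degree can only be spent on \emph{future} facets, and tracking this shows the total number of $K_m$'s reachable while $\omega(G)$ stays above $m$ is bounded by a function $t_0(r)$. Contrapositively, $K_m(G) = t \geq t_0(r)$ forces $\omega(G) = m$.

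The main obstacle, I expect, is making the last step rigorous: showing precisely why a single large facet caps the number of $K_m$'s rather than just capping the net free-degree loss. The subtlety is that free-degree-neutral $K_m$'s are ``free'' in the accounting, so one needs a finer invariant — perhaps tracking the number of vertices with positive free degree (the ``frontier''), which starts at $O(1)$, is increased by at most a bounded amount per large facet, and each $K_m$ facet uses but does not (net) grow it unless its new vertex later spawns more — and showing that the whole process, constrained to keep $\omega(G) > m$, can only run for $O(1)$ steps before the frontier is exhausted. I would handle this by a careful case analysis on whether each $K_m$ is vertex-adding or structural, using Lemma \ref{lem:structfacets} to control structural facets (each adds exactly one edge, hence uses exactly $2$ free degree).
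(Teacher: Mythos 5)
There is a genuine gap, and it is exactly the step you flagged yourself: why one facet larger than $m$ caps the \emph{number} of $K_m$'s rather than just the net free-degree loss. The frontier/free-degree route you sketch cannot be made to work in the form described. Every vertex-adding $m$-facet is free-degree-neutral \emph{and} contributes a brand-new vertex with $r/2$ spare degree, so neither the total free degree nor the set of vertices with positive free degree ever gets exhausted: $\Circ(n,r)$ itself runs forever with a bounded, constantly replenished frontier. Prepending one large facet changes this global accounting only by an additive constant, so no bound of the form ``only $t_0(r)$ many $K_m$'s can be attached'' can follow from tracking free degree or the frontier alone; the obstruction has to be local, not an exhaustion argument.

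The paper's actual mechanism is a local degree overflow at the interface between the large facets and the rest. Let $\bar{V}$ be the set of vertices lying in facets of size greater than $m$; by Proposition \ref{prop:nottoobig} it has size bounded in $r$. By Lemma \ref{lem:folklore}, $t$ large forces $n$ large, and since every vertex lies in at most $\binom{r}{m-1}$ $m$-cliques, if $t\geq t_0(r)$ some $m$-clique is disjoint from $\bar{V}$; such a clique cannot sit inside a larger facet, so it is a facet. Take the \emph{first} such facet $F_i$ in a decreasing shelling order. Shellability forces $F_i$ to meet the earlier complex in a face $H$ of size $m-1=r/2$ contained in some earlier facet $F_j$. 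By minimality of $F_i$, either $F_j$ is larger than $m$ (so all its vertices are in $\bar{V}$) or $F_j$ is an $m$-facet meeting $\bar{V}$, and since $H\subseteq F_i$ avoids $\bar{V}$, in either case the extra vertex $v'\in F_j\setminus H$ lies in $\bar{V}$. Then $v'$ has at least $r/2+1$ neighbors inside its facet of size $\geq m+1$ (all in $\bar{V}$) plus the $r/2$ vertices of $H$ (outside $\bar{V}$), giving $d(v')\geq r+1$, a contradiction. Hence when $\omega(G)>m$ every $m$-clique meets $\bar{V}$ or lies in a large facet, and there are only boundedly many of those, which is what defines $t_0(r)$. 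Your write-up correctly assembles the free-degree preliminaries but is missing this pigeonhole-plus-attachment-vertex argument, which is the heart of the proof.
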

\begin{proof}
From Lemma \ref{lem:folklore}, if $t\geq \binom{r}{m-1}\ell,$ then $n\geq \ell$, so we can choose $t$ large enough to have any fixed number of vertices we wish. From Proposition \ref{prop:nottoobig}, there is a bounded number of vertices in larger faces. Denote these vertices by $\bar{V}$. For $t$ large enough, there is an $F_i$, $|F_i|=m$, such that $F_i\cap \bar{V}=\emptyset$. Denote the first such facet in the shelling order by $\{v_1,v_2,\ldots, v_{r/2}\}$. This must connect to previous facets in an $r/2$ face, say there is a face $\{v',v_1,v_2,\ldots,v_{r/2 -1} \}$. But $v$ is in a larger facet, or else this would have been an earlier $m$-facet separate from the larger faces. So $d(v)>r/2 + r/2=r$, a contradiction. 
\end{proof}

In order to find a potential family of graphs with many $K_m$'s, we will turn to circulant graphs, which are a well-studied class of graphs with desirable symmetry properties.
\begin{definition}
A \textit{circulant graph} is a graph whose vertices are the elements of $\mathbb{Z}_n$ such that each vertex $i$ is adjacent to vertex $i+j$ and $i-j$ mod $n$ for each $j \in J$, for some $J \subseteq [\lfloor n/2 \rfloor]$. Denote by $\Cir(n,J)$ the circulant graph on $n$ vertices with associated set $J$. 
\end{definition}
In this paper we will consider only circulant graphs with $n/2 \notin J$, since then our graphs will be regular of degree $2|J|$. If $J=[t]$ for some $t\leq \lfloor n/2 \rfloor -1$, we form a $K_{t+1}$ centered at each vertex. This seems promising as letting $t=r/2$ gives an $r$-regular graph with many $K_{m}$ cliques. However, circulant graphs are not usually shellable.
\begin{example}
Given below is the graph of $\Cir(8,[3])$ (\cite{sagemath}).
\begin{center}
\includegraphics[width=2.0in, keepaspectratio=true]{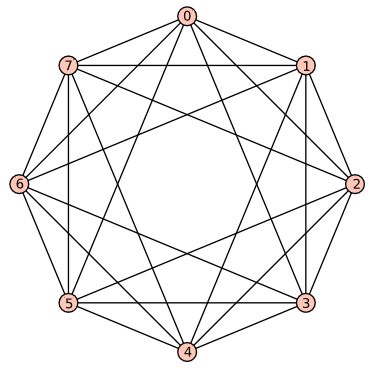}
\end{center}

The facets of the clique complex for this graph are $F_i=\{i,i+1 ,i+2,i+3\}$ for $i \in \mathbb{Z}_8$. But no shelling order exists, as the last facet in any ordering will intersect $\langle F_1,F_2,\ldots F_{l-1}\rangle$ in a disjoint vertex and edge, which is not pure.
\end{example}  
This is the only obstruction to shellability here however, so we will construct a family of ``close to circulant'' graphs where vertices are no longer viewed cyclically. We will define these in terms of the clique complex that they generate.
\begin{definition}
Let $G$ be the graph on $n$ vertices with $\Delta(G)\leq r$ defined by
$$ V(G)=[n]$$
$$ E(G)=\{x\sim y: |y-x|\leq \lfloor r/2 \rfloor\}$$

Denote by $\Circ(n,r)$ the clique complex with $G$ as an underlying graph.
\end{definition}
It is easy to see that the facets of $\Circ(n,r)$ are exactly 
$$F_i=\{i,i+1,i+2,\ldots,i+\lfloor r/2 \rfloor \}, 1\leq i \leq n- \lceil r /2 \rceil $$

If we restrict our attention to pure complexes without any structural facets, we can represent such complexes as a special rooted, labeled graph. 
\begin{definition}
Let $r=2k$. Given a graph $G$ with vertex set $[n]$ and $\omega(G)=m$ whose clique complex has no structural facets and without loss of generality $F_1\cap F_2=\{1,2,\ldots,r/2\}$, we'll construct a directed tree $T$ with root $v$ in the following way:
\begin{itemize}
\item Let $v=F_1 \cap F_2$.
\item $V(T)=\{F_i: i \in [l]\}\cup{v}$
\item $E(T)=\{v\rightarrow F_i: F_i \supset [r/2]\} \cup \{F_i \xrightarrow[]{x} F_j: i<j\text{ minimal such that}\\ (F_j\cap \langle F_1,F_2,\ldots,F_{j-1} \rangle )\subseteq F_i,[r/2]\subseteq F_i, \text{ where } x \in F_i\setminus F_j\}$ .
\end{itemize}
We call this a \textit{$K_m$ tree}. One can recover the original clique complex from the $K_m$ tree.
\end{definition}
Note that this is a tree, since $F_i\xrightarrow[]{x} F_j$ implies $i<j$ and the minimality condition gives that no vertex has more than one incoming edge. We will sometimes associate a vertex-adding facet with the vertex that it adds, so the facet that adds $j$ in the facet order will be denoted $F^*_j$. By convention, the tree vertex associated to vertex $i$ for $i\leq r/2$ in $G$ is $v$, and $F^*_i$ will refer to $v$ in those cases. Note that $F^*_i$ is not necessarily the $i$th facet in the facet ordering.

\begin{example}
As an example we have a clique complex $K(G)$ on the left with shelling order $(\{1,2,3\},\{1,2,4\},\{2,4,5\},\{1,4,6\},\{1,6,8\})$ and the associated $K_m$ tree on the right with the vertices labeled with both their associated facet and the associated $F^*$.
\begin{center}
\includegraphics[]{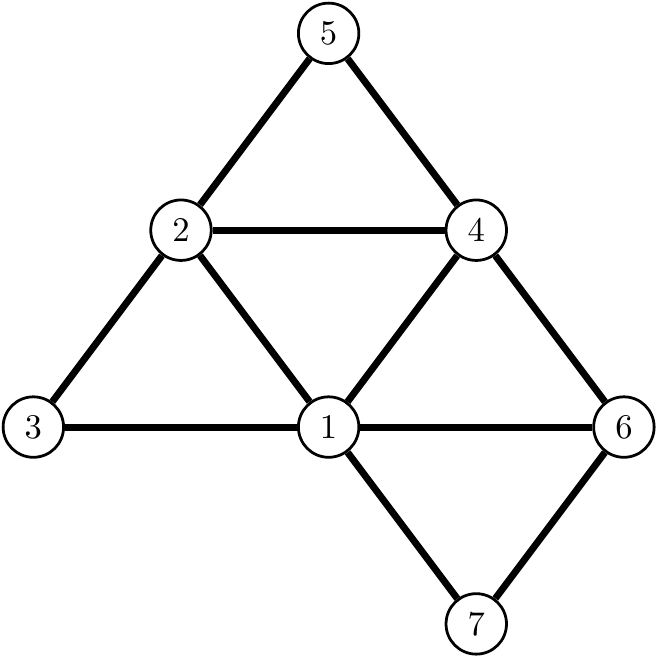}\qquad \qquad \includegraphics[scale=0.8]{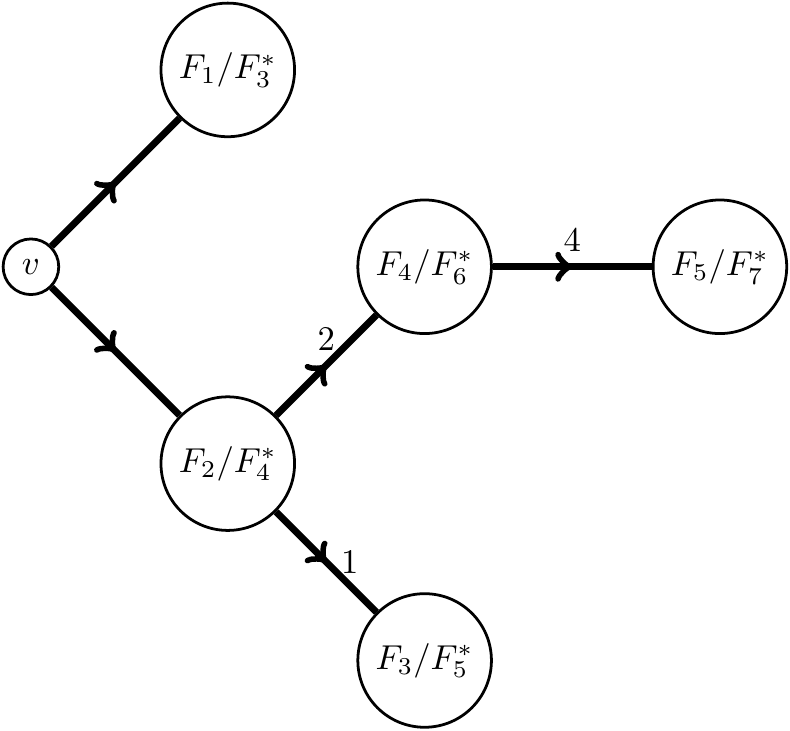}
\end{center}
\end{example}

A vertex $F^*_j$ is said to be \textit{reachable} from vertex $F^*_i$, if there is a directed path from $F^*_i$ to $F^*_j$ which does not use the label $i$. From this the degree of the vertices of $G$ can be recovered.

\begin{lemma}\label{lem:tree}
Let $T$ be the $K_m$ tree constructed from a graph $G$. Then for vertex $F^*_i$,

$\begin{cases}
d_G(i) = r/2 - 1 +|\{F^*_j: F^*_j\text{ is reachable from } F^*_i\}| &\text{if } i\leq r/2  \\
d_G(i) = r/2 + |\{F^*_j: F^*_j \text{ is reachable from } F^*_i\}| &\text{if } i > r/2.
\end{cases}$
\end{lemma}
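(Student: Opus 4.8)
The plan is to proceed by induction on the height of the subtree rooted at $F^*_i$ (equivalently, to peel off leaves of $T$), since the reachability count is naturally recursive: the vertices reachable from $F^*_i$ are, roughly, the children of $F^*_i$ together with what is reachable from each child (after deleting any path that would have to reuse the label $i$). First I would fix a vertex $F^*_i$ and recall exactly what the degree of $i$ in $G$ is: $i$ is joined to $r/2-1$ other vertices among $\{1,\dots,r/2\}$ if $i \le r/2$ (these contribute the ``$-1$'' since $i$ is not joined to itself), and $i$ is joined to $r/2$ vertices ``below'' it if $i > r/2$, namely the $r/2$ vertices of the facet that first introduced $i$, minus $i$ itself. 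So the base contribution $r/2-1$ or $r/2$ in the two cases of the statement accounts precisely for the edges from $i$ down to the facet (or to the root block $[r/2]$) that created it. Everything beyond that must come from \emph{later} facets that add a new vertex adjacent to $i$, and the core claim is that these later vertices are in bijection with the $F^*_j$ reachable from $F^*_i$ in $T$.

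The key step is to show that a vertex $j > i$ (in the labelling order, i.e. $F^*_j$ added after $F^*_i$) is adjacent to $i$ in $G$ if and only if $F^*_j$ is reachable from $F^*_i$ by a directed path in $T$ avoiding the label $i$. For the forward direction, suppose $j \sim i$ with $j$ added by the facet $F$; then $\{i,j\} \subseteq F$, and since $K(G)$ has no structural facets, $F$ intersects $\langle F_1,\dots,F_{(\text{prev})}\rangle$ in a single facet of size $m-1$, namely $F \setminus \{j\}$, which contains $i$. Walking back along the parent edges of $T$ from $F$, each step removes exactly one label (the edge label $x \in F_i \setminus F_j$), and because $i$ stays inside the intersection facet at the step that created $j$, the label $i$ is never the one removed along this walk up until we reach a node already containing $i$ in its ``memory''; this is exactly the statement that the path from that ancestor down to $F^*_j$ avoids the label $i$. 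Conversely, if $F^*_j$ is reachable from $F^*_i$ along a path avoiding label $i$, then $i$ survives in every facet along the path — since the only way $i$ leaves a facet as we descend is via an edge labelled $i$ — so $i \in F$ where $F$ is the facet adding $j$, hence $i \sim j$. I would phrase this cleanly using the fact, implicit in the $K_m$-tree construction, that the vertex set of $F^*_j$ equals $[r/2]$ (or the creating block) together with the multiset of edge labels along the unique root-to-$F^*_j$ path, so ``$i \in F^*_j$ and $i$ is not consumed on the way down from $F^*_i$'' is literally ``$i$ appears on the path but the sub-path below $F^*_i$ has no label $i$''.

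Finally I would assemble: the neighbours of $i$ split into those among the $r/2$ (or $r/2-1$) vertices created no later than $i$, contributing the constant term, and those created later, which by the bijection above number exactly $|\{F^*_j : F^*_j \text{ reachable from } F^*_i\}|$; summing gives the two displayed formulas. The main obstacle I anticipate is the bookkeeping in the ``only if'' direction — making precise that as one walks up the tree from the facet adding $j$, the label $i$ is never the removed label before reaching an ancestor that equals or lies below $F^*_i$ — because this relies on the minimality clause in the definition of the tree edges (the parent of $F_j$ is the \emph{first} $F_i$ whose intersection facet contains $F_j \cap \langle\cdots\rangle$) and on the no-structural-facets hypothesis to guarantee the intersection is a single $(m-1)$-facet rather than something smaller. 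Once that lemma about label-preservation along tree paths is nailed down, the degree count is immediate.
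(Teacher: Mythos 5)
Your proposal is correct and follows essentially the same route as the paper: the heart of both arguments is that along a directed tree path avoiding the label $i$, each facet is obtained from its parent by deleting the edge label and inserting the newly added vertex, so $i$ persists into $F^*_j$ (this is exactly the paper's identity $F^*_j=(F^*_i\cup V(P))\setminus L(P)$), combined with the same base count of $r/2-1$ or $r/2$ neighbours coming from $[r/2]$ or from the facet that introduced $i$. You actually do slightly more than the printed proof by spelling out the converse direction (every neighbour of $i$ added later yields a reachable $F^*_j$, via the fact that a vertex-adding facet glues along the single $(m-1)$-face $F^*_j\setminus\{j\}$, and that walking up the tree $i$ can only disappear at the facet $F^*_i$ itself); just correct the slip in your summary sentence — the vertex set of $F^*_j$ is the root block together with the added vertices along the root-to-$F^*_j$ path \emph{minus} the labels on that path, not the block together with the labels.
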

\begin{proof}
In $G$, $i$ is only adjacent to any vertex it is in an $r/2+1$ clique with. Consider an $F^*_j$ reachable from $F^*_i$ and consider the labels $L(P)$ and associated $F^*$ vertices $V(P)$ of the directed path $P$ between them. Then $F^*_j=(F^*_i\cup V(P)) \setminus L(P)$ and $i\in F^*_j$. Any reachable $F^*_j$ corresponds to the vertex $j$ that it adds, so $i$ is in an $r/2+1$ clique with any $j$ corresponding to a reachable $F^*_j$. For $i\leq r/2$, $i$ is also adjacent to $j\in [r/2]\setminus\{i\}$ and for $i>r/2$, $i$ is adjacent to $j \in F^*_j\setminus\{i\}$.
\end{proof}
\begin{lemma}\label{lem:branch}
Any $K_m$ tree having $r/2+2$ non root vertices has $d(v)=2$ and the labels of edges are in bijection with $[r/2]$.
\end{lemma}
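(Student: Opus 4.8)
The plan is to count the edges of the $K_m$ tree $T$ and to pin down their labels by combining Lemma~\ref{lem:tree} with the degree bound $\Delta(G)\le r$. Write $k=r/2$, so that $m=k+1$ and $[r/2]=[k]$, and let $l=k+2$ be the number of non-root (facet) vertices, so $T$ has $l+1=k+3$ vertices and hence exactly $l=k+2$ edges. The edges leaving the root are the unlabeled edges $v\to F_s$, one for each facet $F_s\supseteq[k]$; write $c=d(v)$ for their number. Since $F_1\cap F_2=[k]$, both $F_1$ and $F_2$ contain $[k]$ and are distinct, so $c\ge 2$. The remaining $l-c=k+2-c$ edges are the labeled edges $F_i\xrightarrow{x}F_j$. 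Thus the lemma will follow once we show that \emph{every element of $[k]$ occurs as an edge label}: this forces $k+2-c\ge k$, hence $c\le 2$, and combined with $c\ge 2$ it gives $d(v)=c=2$; then the remaining $k$ labeled edges already carry all $k$ values of $[k]$ among them, so by counting each of these values occurs exactly once and no other value can occur — which is exactly the asserted bijection (in particular every label lies in $[k]$ by pigeonhole).

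To prove that every $x\in[k]$ is used as a label I argue by contradiction. Suppose no edge of $T$ carries the label $x$. Now $T$ is an arborescence rooted at $v$ (the root has no incoming edge, and $F_i\to F_j$ forces $i<j$, so all edges point away from $v$), hence every facet vertex $F_1,\dots,F_l$ is reachable from $v$ by a directed path; and if no edge is labeled $x$, each such path vacuously avoids the label $x$. So, in the notation of Lemma~\ref{lem:tree}, the vertex $x\le k$ satisfies $|\{F^*_j:F^*_j\text{ reachable from }v\text{ avoiding }x\}|=l=k+2$, and the formula of that lemma gives $d_G(x)=(r/2-1)+(k+2)=2k+1$. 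This contradicts $\Delta(G)\le r=2k$. Hence some edge of $T$ is labeled $x$, which is what we needed.

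The one point requiring care is the bookkeeping in the middle step. One must use the convention implicit in Lemma~\ref{lem:tree} that the root $v$ is \emph{not} counted among the vertices reachable from $v$, so that in the absence of a label-$x$ edge the count equals precisely the number $l=k+2$ of facet vertices and not one more; and one must observe that every facet vertex really is reachable from the root, which is immediate from $i<j$. Granting this, the rest is the elementary edge count of the first paragraph. I expect this bookkeeping — keeping straight which facet each $F^*_j$ denotes, and that reachability in the relevant sense means a directed root-path not using the forbidden label — to be the only genuine obstacle; the arithmetic and the pigeonhole finish are routine.
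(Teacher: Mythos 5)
Your proof is correct and takes essentially the same approach as the paper: count the $r/2+2-d(v)$ labeled edges, then use Lemma~\ref{lem:tree} to show that a label $x\in[r/2]$ missing from the tree would make all $r/2+2$ facet vertices reachable from $F^*_x=v$ and force $d_G(x)=r+1>r$, whence every label occurs, $d(v)=2$, and the labels biject with $[r/2]$. You simply spell out the reachability and edge-count bookkeeping that the paper's terser argument leaves implicit.
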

\begin{proof}
By definition $d(v)\geq 2$. Our tree has $r/2 + 2 - d(v)$ labeled edges. If the number of labels used is less than $r/2$ (say $r/2$ isn't used), then $d(r/2)=r/2 -1 + r/2 + 2= r+1$, a contradiction. Therefore, if $d(v)$ is greater than $2$ we have an immediate contradiction and if $d(v)=2$, then we must have a bijection between the labeled edges and $\{1,2,\ldots, r/2 \}$.
\end{proof}
This lemma is enough to show that these trees are very limited in how they can look.
\begin{theorem}\label{thm:unique}
Let $r=2k$. There is up to isomorphism as non-rooted trees only one $K_m$ tree with $r/2 + 3$ facets of size $m$ on $r+3$ vertices. 
\end{theorem}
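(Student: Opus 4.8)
\emph{Proof proposal.} The plan is to show that every $K_m$ tree with $r/2+3$ facets is, as an unrooted graph, a path on $r/2+4$ vertices. Since the ambient complex has no structural facets, a complex with $\ell=r/2+3$ facets of size $m$ has exactly $r+3$ vertices, so $T$ has $r/2+4$ vertices and $r/2+3$ edges, of which $d(v)$ leave the root $v$. Existence is witnessed by $\Circ(r+3,r)$: it has exactly $r/2+3$ facets, all of size $m$ and all vertex-adding, and a direct computation (parent of $F_j$ is $F_{j-1}$ for $j\ge3$, and $v$ has only the children $F_1,F_2$) shows its $K_m$ tree is the path $F_1-v-F_2-F_3-\cdots-F_{r/2+3}$. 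Since any two paths on the same number of vertices are isomorphic as unrooted trees, it remains only to prove the uniqueness claim, i.e.\ that $T$ must be a path.

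The first step is to pin down $d(v)=2$. I would use Lemma~\ref{lem:tree}: if some $i\in[r/2]$ never occurs as an edge label, then all $r/2+3$ facets are reachable from $v$ avoiding label $i$, so $d_G(i)=r/2-1+(r/2+3)=r+2>r$; hence every element of $[r/2]$ occurs as a label, giving at least $r/2$ labelled edges and $d(v)\le3$. To rule out $d(v)=3$, note there are then exactly $r/2$ labelled edges, the labels are precisely $[r/2]$ each used once, and each such label must lie on the root-path of at least two facets (else $d_G(i)=r/2-1+(r/2+2)=r+1$); so no leaf of $T$ is the head of a labelled edge, hence every leaf is a child of $v$. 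Counting leaves via $\sum_x(d_T(x)-2)=-2$ then forces exactly three leaves, all equal to the three children of $v$, so $|V(T)|=4$, absurd for $r\ge2$. Thus $d(v)=2$; write $A,B$ for the two children of $v$ and $a,b$ (both $>r/2$) for the vertices they add.

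Now I would assume for contradiction that $T$ is not a path, so some facet $F\neq v$ has at least two children and hence, again by the leaf count, $T$ has at least three leaves; since $v$ has only two children, some leaf $L$ is the head of a labelled edge $e_L$. With $d(v)=2$ there are $r/2+1$ labelled edges, all of $[r/2]$ occurs, and any label $i\in[r/2]$ carried by a single edge must block at least two facets while the subtree below $e_L$ is just $\{L\}$; this leaves exactly two possibilities: (I) one label of $[r/2]$ occurs twice, every other label of $[r/2]$ once, and no label outside $[r/2]$ appears, or (II) $[r/2]$ is used bijectively and one further edge carries a label $>r/2$. In either case every singly-used label of $[r/2]$ has its edge terminating at an internal facet, producing at least $r/2-1$ internal facets distinct from $A,B$ (at least $r/2$ in case (II)); but $T$ has at least three leaves, so at most $r/2$ internal facets in all, which in case (II) forces both $A$ and $B$ to be leaves, hence $|V(T)|=3$ (absurd), and in case (I) forces, after relabelling, $A$ to be a leaf. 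Then the subtree below $B$ contains all $r/2+2$ remaining facets, and since $b>r/2$ is never a label in case (I), Lemma~\ref{lem:tree} gives $d_G(b)=r/2+(r/2+1)=r+1>r$, a contradiction. Therefore $T$ is a path, which completes the proof.

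I expect the main obstacle to be the bookkeeping in the last step: bounding how often a label of $[r/2]$ can repeat (at most once, from the edge count), showing that singly-used labels cannot end at leaves, and then extracting from the leaf count that one child of $v$ must be a leaf so that the degree of the vertex added by the other child of $v$ overflows. I would also budget for a separate quick treatment of small $r$, especially $r=2$ where $[r/2]$ consists of a single label and several of the inequalities above degenerate.
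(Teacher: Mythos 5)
Your proposal is correct, and it reaches the conclusion (every such $K_m$ tree is a path, realized by $\Circ(r+3,r)$) by a genuinely different organization than the paper. The paper first proves Lemma~\ref{lem:branch} for subtrees with $r/2+2$ non-root vertices, then supposes a directed $K_{1,2}$ away from the root, passes to an $(r/2+2)$-vertex subtree containing it, and derives the contradiction by re-inserting the last facet and tracking which leaf labels its root-path would have to carry. You instead work on the full tree throughout: the missing-label argument via Lemma~\ref{lem:tree} pins $d(v)\in\{2,3\}$; $d(v)=3$ dies by the leaf count $\sum_x(d_T(x)-2)=-2$ once singly-used labels are seen to force internal heads; and for $d(v)=2$ you split on the multiset of the $r/2+1$ labels (one element of $[r/2]$ repeated, versus a bijection on $[r/2]$ plus one label exceeding $r/2$), play the count of internal facets against the at-least-three-leaves bound to force a child of the root to be a leaf, and then overflow the degree of the vertex added by the other child via Lemma~\ref{lem:tree}. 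I checked the arithmetic in each branch (including $r=2$, where in fact no separate treatment is needed: the bounds still close), and the degree computations are consistent with the reachability convention used elsewhere in the paper (proper descendants only, so the last vertex of $\Circ$ gets degree $r/2$ and the vertex added by $F_2$ gets degree $r$). What the two routes buy: the paper's version is shorter because it leans on Lemma~\ref{lem:branch} and the subtree-extension trick, but its final degree bookkeeping is terse; yours avoids Lemma~\ref{lem:branch} entirely and makes all the counting explicit, at the cost of a longer case analysis. One small redundancy in your write-up: the clause about the leaf $L$ and its edge $e_L$ is not needed to obtain the dichotomy (I)/(II), which follows from the label count alone; what you actually use later is only the bound of at least three leaves and the internal-facet count.
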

\begin{proof}
By definition the tree corresponding to $\Circ(r+3,r)$ has the right number of facets and vertices. As a non-rooted tree this $K_m$ tree is merely a path. Suppose that $G\ncong \Circ(r+3,r)$. Construct the facet tree at some vertex $v$. Then in the tree, there must be a directed $K_{1,2}$ (besides the one at $v$), or else it is a path, a contradiction. Consider an $r/2 + 2$ subtree containing this $K_{1,2}$. Let $v_1$ and $v_2$ be the two neighbors of $v$. By Lemma \ref{lem:branch}, these are the only two neighbors of $v$.

Case 1: $v_1$ and $v_2$ both have descendants. Then our tree has at least $3$ labeled leaves, $F^*_{\ell_1},F^*_{\ell_2},F^*_{\ell_3}$. Each leaf $F^*_{\ell_i}$ corresponds to a vertex in $[r/2]$ of full degree, as the vertex whose label is on the edge to $F^*_{\ell_i}$ is reachable by every vertex but $\ell_i$. To go from the subtree to the full tree, we add the last edge and vertex $w=F*_j$ in. The added vertex has tree depth at most $r/2$. But the path from $v$ to $w$ must have the labels of all leaves or at least one of them, say $\ell_1$ will have degree $r+1$, as then $j$ will be reachable by $\ell_1$. But the path can contain at most $2$ of those labels, since these labeled leaves before, hence were not in a path together, and we have added only one edge. 

Case 2: Without loss of generality $v_2$ has no descendants. Then if $v_1=F^*_j$, $j$ has degree $r/2 + 1$ since $j\notin [r/2]$ and the labels of the subtree edges are in bijection by Lemma \ref{lem:branch}. But then going from the subtree to the full tree, the added edge must have label $j$ but also have the labels of the leaves.

Hence there cannot be two $K_{1,2}$, so the $K_m$ tree as a non-rooted tree is isomorphic to a path. 
\end{proof}
\begin{cor}
Let $r=2k$. There is up to isomorphism only one pure complex $\Delta=K(G)$ with $V(G)=n$, $\Delta(G)\leq r$, and $K(G)$ shellable.
\end{cor}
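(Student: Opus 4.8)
The plan is to deduce the corollary from Theorem \ref{thm:unique} together with the structural results on $K_m$ trees and the reduction lemmas established earlier. The key point is that a pure shellable clique complex $\Delta = K(G)$ with $\Delta(G) \le r$ is, by the Rearrangement Lemma, shellable via a facet order of non-increasing size; purity forces all facets to have the same size $\ell$, so every facet is a $K_\ell$, and by Lemma \ref{lem:structfacets} each structural facet adds exactly one edge. First I would argue that we may assume $\ell = m = r/2+1$: if $\ell \le r/2$ the complex is a clique complex whose facets are small, and by Proposition \ref{prop:nottoobig} (or directly, counting free degree) the number of facets is bounded, so in fact there is essentially nothing to classify beyond finitely many small complexes — and for the interesting regime (unbounded $n$) purity together with $\Delta \le r$ pins $\ell$ down. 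So the substantive case is $\ell = m$, a pure complex all of whose facets are $K_m$'s.

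Next I would reduce to the case of no structural facets. A structural facet adds a single edge by Lemma \ref{lem:structfacets}; I would show that in a pure complex of dimension $m-1$ a structural facet would force two vertices whose degree exceeds $r$ once enough facets have been added, or else disconnects the $K_m$-adjacency in a way incompatible with the pure intersection condition — essentially the same free-degree bookkeeping as in Proposition \ref{prop:nottoobig}, now applied to show structural facets cannot occur at all in the pure case. With structural facets eliminated, every facet except the first is vertex-adding, and the $K_m$ tree $T$ of Definition (the one preceding the last Example) is defined and, as noted there, the clique complex can be recovered from $T$.

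Now the argument is purely about the tree. By Lemma \ref{lem:tree}, $d_G(i)$ is controlled by the number of vertices reachable from $F^*_i$, and the degree bound $d_G(i) \le r$ translates into a bound on reachability; Lemma \ref{lem:branch} then forces $d_T(v) = 2$ on any $r/2+2$-vertex subtree, and Theorem \ref{thm:unique} upgrades this to: as an unrooted tree, $T$ must be a path. I would then observe that a path-shaped $K_m$ tree, read back off, yields exactly the facets $F_i = \{i, i+1, \ldots, i + \lfloor r/2 \rfloor\}$, i.e. $\Circ(n,r)$, independently of where the root was placed — so up to isomorphism $\Delta \cong \Circ(n,r)$. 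The one subtlety is handling the root: different choices of $v = F_1 \cap F_2$ give different rooted trees, but Theorem \ref{thm:unique} is stated for the unrooted isomorphism type, and $\Circ(n,r)$'s tree is a path regardless, so this is consistent.

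The main obstacle I expect is the reduction step — showing a pure shellable clique complex with $\Delta(G)\le r$ has no structural facets (or only finitely many bounded configurations) and that its facet size is forced to be $m$. Theorem \ref{thm:unique} already does the hard combinatorial work for the no-structural-facet, $\ell=m$ case, but it is stated for a specific number of facets ($r/2+3$ facets on $r+3$ vertices); extending it to arbitrary $n$ requires either an inductive argument peeling off leaf facets of the $K_m$ tree — at each step the remaining tree is still a $K_m$ tree of a smaller pure shellable complex, and removing a leaf of a path leaves a path — or invoking that any two non-root $K_{1,2}$'s in $T$ already contradict Lemma \ref{lem:branch} within a sufficiently large subtree, exactly as in the proof of Theorem \ref{thm:unique}. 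I would write the corollary's proof as: assume structural facets are absent and facet size is $m$ (justified by the free-degree argument), apply the inductive/subtree version of Theorem \ref{thm:unique} to conclude $T$ is a path, and recover $\Delta = \Circ(n,r)$.
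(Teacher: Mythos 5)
Your proposal sets out to prove a much stronger statement than the paper intends by this corollary, and the steps you add to get there do not hold. In context the corollary is just the complex-level restatement of Theorem \ref{thm:unique}: in the setting of that theorem ($r/2+3$ facets of size $m$ on $r+3$ vertices, so every facet after the first is vertex-adding and the $K_m$ tree is defined), the tree must be a path, and since the complex is recoverable from its $K_m$ tree, the only possibility is $\Circ(r+3,r)$ --- that one-line deduction is the paper's entire proof, and the general-$n$ classification is postponed to Theorem \ref{thm:pure}, which carries the extra hypotheses $\omega(G)=m$ and $n$ large and is proved by a different mechanism (the $K(G,m)$ graph, removal of boundedly many structural facets, pigeonhole, and Proposition \ref{prop:tendril}). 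Read for arbitrary $n$ without the $\omega(G)=m$ hypothesis, the statement you are trying to prove is simply false: for $r\geq 4$ a path $P_n$ (or any triangle-free connected graph of maximum degree at most $r$) has a pure, shellable clique complex with $\Delta\leq r$ and arbitrarily many facets, and it is not $\Circ(n,r)$.

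The concrete gaps are these. First, your reduction to facet size $\ell=m$ misuses Proposition \ref{prop:nottoobig}: that proposition bounds the number of facets \emph{larger} than $r/2+1$, not smaller; the free-degree count actually shows that facets of size below $m$ \emph{gain} free degree, so nothing bounds their number, and this is exactly why the hypothesis $\omega(G)=m$ must be assumed rather than derived. Second, your claim that structural facets ``cannot occur at all in the pure case'' is false: for $r=4$, add the edge $\{2,6\}$ to the graph of $\Circ(7,4)$; the clique complex gains the facet $\{2,4,6\}$, which meets the earlier facets in the pure $1$-dimensional complex $\gen{\{2,4\},\{4,6\}}$, so the complex is still pure, shellable, with $\Delta\leq 4$ and $\omega=3$, yet it is not $\Circ(7,4)$. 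Structural facets can only be \emph{bounded in number} by the free-degree argument, which is precisely how the proof of Theorem \ref{thm:pure} handles them; they cannot be excluded outright, and your sketch offers no argument that would do so. Third, the ``peel off a leaf'' induction extending Theorem \ref{thm:unique} to arbitrary $n$ is not justified: that proof leans on the exact facet/vertex count ($r/2+3$ facets on $r+3$ vertices) to produce labeled leaves whose associated vertices have full degree, and the bookkeeping changes for longer trees; the paper bridges this gap not by induction on the tree but through Proposition \ref{prop:tendril}, which propagates a single $\Circ(r+3,r)$ subcomplex to all of $K(G)$. Your final step --- reading a path-shaped $K_m$ tree back off as $\Circ$ --- is in the right spirit and matches the intended content of the corollary, but the reductions placed in front of it fail, and for the corollary as the paper uses it none of them are needed.
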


Hence, $\Circ(r+3,r)$ is the only possibility.
The presence of a $\Circ(r+3,r)$ subcomplex in a pure complex with $\omega(G)=m$ is actually enough to identify what complex we are working with.
\begin{prop}\label{prop:tendril}
Let $r=2k$. If $K(G)$ is a pure shellable complex with $\omega(G)=m$ and $K(G)$ contains a $\Circ(r+3,r)$, then $K(G)=\Circ(n,r)$.
\end{prop}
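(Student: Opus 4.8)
The plan is to prove this by induction on $n$, viewing the embedded $\Circ(r+3,r)$ as a fixed ``seed'' from which the rest of $K(G)$ is forced to grow as two linear tendrils. Label the seed's vertices $1,2,\dots,r+3$ so that its facets are the intervals $\{i,i+1,\dots,i+r/2\}$. Two facts do the work. First, the three central vertices $r/2+1,r/2+2,r/2+3$ already have degree exactly $r$ inside the seed, so they acquire no further neighbours in $G$, and each end block $\{1,\dots,r/2\}$, $\{r/2+4,\dots,r+3\}$ is a clique. Second, since $K(G)$ is pure of dimension $m-1$ with $\omega(G)=m$, every edge and every $m$-face of $G$ lies in a facet, the seed contributes $r/2+3$ facets of $K(G)$, and --- working as throughout this part of the paper with complexes having no structural facets, where the $K_m$-tree is available --- the Rearrangement Lemma lets us take shellings in which every facet past the first adds a single vertex. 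In particular the last facet of any such shelling adds a vertex $w$ lying in no other facet, so $d_G(w)=m-1=r/2$, and $\langle F_1,\dots,F_{\ell-1}\rangle=K(G-w)$ is again a pure shellable clique complex with $\omega=m$ and $\Delta\le r$.

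The base case $n=r+3$ is immediate: such a complex has exactly $1+(r+3-m)=r/2+3$ facets, so by Theorem~\ref{thm:unique} its $K_m$-tree is forced to be a path and $K(G)=\Circ(r+3,r)$. For the inductive step $n>r+3$, I would choose a shelling whose last facet $F_\ell$ adds a vertex $w$ \emph{outside} the seed. This is possible because the only seed vertices of degree $\le r/2$ in $G$ are the two seed endpoints, so at most two removable vertices lie in the seed; hence if the $K_m$-tree of $K(G)$ is not already a path (in which case $K(G)=\Circ(n,r)$ and we are done) it has at least three leaves, one of which adds a non-seed vertex. Deleting such a $w$, the subcomplex $K(G-w)$ still contains the seed, so by induction $K(G-w)=\Circ(n-1,r)$. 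Finally $N_G(w)$ is an $(m-1)$-clique of $\Circ(n-1,r)$ whose vertices must all be unsaturated (else re-attaching $w$ pushes some degree past $r$), and for $n>r+3$ the only length-$r/2$ cliques of $\Circ(n-1,r)$ consisting of unsaturated vertices are the two end blocks $\{1,\dots,r/2\}$ and $\{n-r/2,\dots,n-1\}$; attaching $w$ to one of these turns $\Circ(n-1,r)$ into $\Circ(n,r)$, completing the induction.

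A shorter, less bookkeeping-heavy alternative is to run the proof of Theorem~\ref{thm:unique} verbatim on the $K_m$-tree $T$ of $K(G)$: if $T$ is not a path it contains a directed $K_{1,2}$ branch at a non-root vertex, and the $r/2+3$ facets supplied by the seed give more than enough vertices to enclose this branch in an $(r/2+2)$-vertex subtree, to which Lemma~\ref{lem:branch} applies (forcing the local root to have degree $2$ and the edge labels to biject with $[r/2]$); the case analysis of Theorem~\ref{thm:unique} then produces a vertex of degree $r+1$, a contradiction. In effect this says that the corollary to Theorem~\ref{thm:unique} kicks in once there are enough facets, and the hypothesis $K(G)\supseteq\Circ(r+3,r)$ is precisely what supplies them.

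The step I expect to be the main obstacle is the one that keeps the seed intact while peeling --- guaranteeing a removable vertex outside the $\Circ(r+3,r)$, i.e.\ controlling how the seed's $r/2+3$ facets sit inside an arbitrary shelling or $K_m$-tree of $K(G)$ (and, in the alternative route, arranging the $(r/2+2)$-vertex subtree so that Lemma~\ref{lem:branch} genuinely applies). A secondary point: the statement as given does not literally exclude structural facets, and if they are allowed the handful permitted by the free-degree bound must be removed separately, which introduces a mild lower bound on $n$ --- harmless for the use of this proposition in Theorem~\ref{thm:pure}, where $n$ is taken enormous. Everything else reduces to the $K_m$-tree rigidity already established in Lemmas~\ref{lem:tree} and \ref{lem:branch} and Theorem~\ref{thm:unique}, together with short maximum-degree counts.
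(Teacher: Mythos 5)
Your plan is genuinely different from the paper's proof, and it has real gaps. The paper's argument is a short maximality/degree-saturation argument: let $t$ be largest with $\Circ(t,r)\subseteq K(G)$ (so $t\geq r+3$); every interior vertex of this chain already has degree $r$, so if $t<n$, purity and shellability force some further facet to meet the chain in an $(m-1)$-face, which can only be one of the two end blocks $\{1,\dots,r/2\}$ or the last $r/2$ vertices; adding that facet produces $\Circ(t+1,r)$, contradicting maximality, so $t=n$. None of the tree machinery, peeling, or induction is needed; the seed's role is only to guarantee the chain is long enough to have saturated interior vertices.

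The concrete problems with your proposal: (1) The assertion that the Rearrangement Lemma yields a shelling in which every facet past the first adds a vertex is false --- in a pure complex all facets have the same size, so the lemma says nothing, and pure shellable complexes can have structural facets. Excluding them is part of what Proposition \ref{prop:tendril} asserts (since $\Circ(n,r)$ has none), and the proposition is applied in Theorem \ref{thm:pure} to complexes that may carry up to $(r/2)(r/2+1)/2$ structural facets by Proposition \ref{prop:nottoobig}; ``remove them separately with a mild lower bound on $n$'' is not an argument and alters the statement. (2) Your induction is anchored on the claim that a path-shaped $K_m$ tree forces $K(G)=\Circ(n,r)$ for arbitrary $n$; the paper proves such rigidity only for $n=r+3$ (Theorem \ref{thm:unique}), and proving it for all $n$ is essentially the content of Theorem \ref{thm:pure} itself, so you cannot invoke it. (3) The ``shorter alternative'' of running the proof of Theorem \ref{thm:unique} verbatim on the full tree breaks down: that case analysis depends on there being exactly one facet beyond the chosen $(r/2+2)$-vertex subtree (``we have added only one edge''), which fails once there are more than $r/2+3$ facets, and Lemma \ref{lem:branch} itself applies only to trees with exactly $r/2+2$ non-root vertices. (4) The step you flag as the main obstacle --- producing a shelling whose last facet adds a vertex outside the seed, i.e.\ that a chosen leaf facet can be placed last while the remainder stays shellable --- is indeed unresolved, and your leaf count also ignores the root block $[r/2]$, whose vertices are not added by any facet. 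The paper's proof sidesteps all of these issues, so I would adopt its saturation argument rather than repair the induction.
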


\begin{proof}
Let $t$ be the largest integer such that $\Circ(t,r)$ is a subcomplex of $K(G)$. In particular, $m\geq r+3$. This portion of the graph has $m-r$ vertices with degree $r$ in the middle and $r/2$ vertices on either end. Consider how this subcomplex connects to the rest of our complex. Since the middle vertices have maximum allowable degree, the only way to connect in an $r/2$ face is to take the first $r/2$ vertices or the last $r/2$ vertices in the subcomplex, as there is no $r/2$ clique with vertices from both ends. But then adding that facet forms a $\Circ(t+1,r)$, so $t=n$.
\end{proof}

\begin{theoremsp1}
Let $G$ be a graph with $\omega(G)=m$ and $\Delta(G)=r=2k$, where $K(G)$ is a pure shellable complex. If $n \geq r(r/4)(r/2+1)(r/2+2)+1$, then $K(G)=\Circ(n,r)$.
\end{theoremsp1}
\begin{proof}
Consider the $K(G,m)$ graph. For any $w \in K(G,m), d(w)\leq r$. To see this, without loss of generality, let $w=[r/2+1]$. Consider the $r/2$ faces containing $1$ in $[r/2 + 1]$. There are at most $r/2$ other facets containing at least one of these, or else $d_G(1)>r$. If you consider the remaining $r/2$ face that doesn't contain $1$, there are at most $r/2$ other facets containing that face or else $d_G(2)>r$. In total, $d(w) \leq r/2+r/2=r$. 

By Proposition \ref{prop:nottoobig}, $K(G)$ has at most $(r/2)(r/2+1)/2$ structural facets. If the structural facets are removed from $K(G,m)$, there are at most $r(r/2)(r/2+1)/2$ disconnected components as each removed vertex has at most $r$ neighbors. Suppose $n \geq r(r/4)(r/2+1)(r/2+2)+1$. Then by pigeonhole, one of these components has at least $r/2+3$ vertices. Take the connected, induced subgraph of $K(G,m)$ on $r/2+3$ of these vertices. The corresponding facets in $K(G,m)$ are all vertex-adding facets and are connected, and thus these facets constitute $r+3$ vertices in $G$. But then this is $r/2 + 3$ $m$-facets on $r+3$ vertices, so by Theorem \ref{thm:unique}, they make up a $\Circ(r+3,r)$. But then $K(G)$ contains a $\Circ(r+3,r)$, so by Proposition \ref{prop:tendril}, $K(G)=\Circ(n,r)$.
\end{proof}
So $\Circ(n,r)$ is optimal among pure complexes with $\omega(G)=m$ because it is actually the only game in town.

 Let's use Corollary \ref{prop:formula} to calculate $k_t(\Circ(n,r))$. Since we have $r_1(\Circ(n,r))=0$ and $r_i(\Circ(n,r))=1$ for $i>1$, for $t\leq m$
\begin{align*}
k_t(\Circ(n,r))&=\binom{m}{t} + (n-m)\binom{m-1}{t-1}\\
&=\binom{m-1}{t-1}\left(\dfrac{m}{t}+n-m\right).
\end{align*}

From Proposition \ref{prop:nottoobig}, for $n$ large enough we know that a pure complex must have $\omega(G)\leq r/2+1$. We will show that no pure complex does better than $\Circ(n,r)$.

Suppose $K(G)$ is a pure complex with $\omega(G)=s=m-i$. There are $n-m+i+1$ vertex adding facets and suppose our complex has $w$ additional structural facets. 

Like above, besides the first, each of the vertex adding facets has at least one distinct vertex from previous facets. Each of the $w$ additional structural facets has at least two distinct vertices from previous facets (it's easy to see that if it had one it would be vertex adding). So for $t\leq m$,
\begin{equation}\label{eqn:needlabel}
k_t(G)\leq \binom{m-i}{t} + (n-m+i)\binom{m-i-1}{t-1} + w\binom{m-i-2}{t-2}.
\end{equation}
\begin{lemma}\label{lem:epsilon}
With $K(G)$ as described above, $w<(i+o(1))n$ for $n$ large enough.
\end{lemma}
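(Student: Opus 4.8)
The plan is to prove the sharper statement $w \le in + \binom{s}{2}$, where $s = m-i = \omega(G)$; this suffices since $\binom{s}{2}$ depends only on $r$ and $i$, not on $n$, so it is $o(n)$. The method is a global edge count in the terminal graph $G$, which is just the free-degree bookkeeping of Proposition~\ref{prop:nottoobig} carried out to the end of the shelling.

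First I would fix a shelling order $\mathbf{F}$ (by the Rearrangement Lemma with weakly decreasing facet sizes, although purity already forces every facet to have exactly $s$ vertices) and track how $|E(G_j)|$ grows. The first facet is a $K_s$, contributing $\binom{s}{2}$ edges to the $1$-skeleton. Each of the remaining $n-s$ vertex-adding facets attaches its new vertex to the already-present $(s-1)$-face through which it meets the earlier complex, so it contributes exactly $s-1$ new edges. And by Lemma~\ref{lem:structfacets} each of the $w$ structural facets contributes exactly one new edge. Summing,
\[
2|E(G)| \;=\; 2\binom{s}{2} + 2(n-s)(s-1) + 2w .
\]
Next I would use $\Delta(G)\le r$ in the form $2|E(G)| = \sum_{v} d_G(v) \le nr$, i.e.\ the free degree $nr - 2|E(G)|$ is nonnegative. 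Solving for $w$,
\[
w \;\le\; \frac{nr - 2\binom{s}{2} - 2(n-s)(s-1)}{2} \;=\; \frac{r - 2(s-1)}{2}\,n + \binom{s}{2}.
\]
Finally I would apply the arithmetic identity that makes the coefficient of $n$ collapse: since $s-1 = m-i-1 = r/2 - i$, we have $r - 2(s-1) = r - (r - 2i) = 2i$, so $w \le in + \binom{s}{2}$, as required. (Equivalently, in the language of Proposition~\ref{prop:nottoobig}: one starts from $s(r-s+1)$ units of free degree; by equation~$(*)$ each later vertex-adding facet of size $s$ changes the free degree by $r+2-2s = 2i$, while each structural facet subtracts $2$; nonnegativity of the terminal free degree gives $s(r-s+1) + 2i(n-s) - 2w \ge 0$, the same inequality.)

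I do not expect a genuine obstacle; the only points needing a moment of care are (i) the claim that a vertex-adding facet of size $s$ adds exactly $s-1$ edges — every other edge of the facet already lies in the $(s-1)$-face of intersection with the earlier complex, and the new vertex acquires no further neighbours because the shelling only introduces faces of that facet — and (ii) the bookkeeping identity $r - 2(s-1) = 2i$, which is the even-case specialization of equation~$(*)$. Both are short, so the whole argument is essentially the displayed chain of inequalities.
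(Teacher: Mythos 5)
Your proof is correct and follows essentially the same route as the paper: the paper's ``free degree'' bookkeeping (initial free degree of the first facet, net change $r+2-2s=2i$ per vertex-adding facet, $-2$ per structural facet, nonnegativity at the end) is exactly your inequality $2|E(G)|\le nr$ with the same per-facet edge counts, and you even state this equivalence yourself. Your version additionally records the clean explicit bound $w\le in+\binom{s}{2}$, which is a harmless sharpening of the paper's $(i+o(1))n$.
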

\begin{proof}
Using formula $(*)$ from Proposition \ref{prop:nottoobig}, we start with $s(s-1)$ free degree and each other vertex adding facet adds $(r-2s+2)$ additional free degree. From Lemma \ref{lem:structfacets}, each structural facet adds exactly one edge, so each reduces free degree by $2$. Therefore,
\begin{align*}
(s)(s-1)+(n-s)(r-2s+2)-2w&\geq 0\\
1/2(s)(s-1)+ 1/2(n-s)(r-2s+2)&\geq w\\
(r/2+1-s)n+o(1)n &> w\\
(i+o(1))n &>w
\end{align*}
\end{proof}
We will require the use of the following lemma about binomial coefficients.
\begin{lemma}\label{lem:binom}
Let $a,b,c \in \mathbb{N}$ such that $0\leq c \leq b \leq a$. Then 
$$ \binom{b}{c}+(a-b)\binom{b-1}{c-1} \leq \binom{a}{d} \text{ for } c\leq d \leq c+(a-b)-1 $$
\end{lemma}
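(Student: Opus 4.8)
The plan is to prove the inequality
$$\binom{b}{c}+(a-b)\binom{b-1}{c-1}\le\binom{a}{d}$$
for $c\le d\le c+(a-b)-1$ by a combinatorial injection, which seems cleaner than manipulating factorials. First I would interpret the left side: $\binom{b}{c}$ counts $c$-subsets of $[b]$, and $(a-b)\binom{b-1}{c-1}$ counts pairs $(x,S)$ where $x\in\{b+1,\dots,a\}$ and $S$ is a $(c-1)$-subset of $[b-1]$. I would like to inject all of these into the $d$-subsets of $[a]$. The natural map sends a $c$-subset $T\subseteq[b]$ to a canonical $d$-superset inside $[a]$ (for instance $T\cup\{b+1,b+2,\dots,b+d-c\}$, using that $d-c\le a-b$), and sends a pair $(x,S)$ with $x=b+j$ to $S\cup\{b\}\cup\{x\}\cup(\text{filler})$, choosing the filler from a block disjoint from the first family's filler so the two families land in disjoint target sets.

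The key steps, in order: (i) check the index constraints — we need $0\le d\le a$, which follows since $d\le c+(a-b)-1\le b+(a-b)-1<a$ and $d\ge c\ge 0$, and we need the fillers to fit, i.e.\ $d-c\le a-b-1$, which is exactly the hypothesis $d\le c+(a-b)-1$; (ii) define the injection on the first family $T\mapsto T\cup B_1$ where $B_1$ is a fixed $(d-c)$-block of $\{b+1,\dots,a\}$, noting these images all contain $B_1$ but avoid the rest of $\{b+1,\dots,a\}$, and in particular never contain $b$ if we also arrange... actually it is cleaner to make the images of the first family be precisely the $d$-subsets of $[a]$ that contain $B_1$ and are otherwise inside $[b]$; (iii) define the injection on the second family so that each image contains the element $b+j$ for the corresponding $x=b+j$ together with $b$ and $S$, filling out to size $d$ from a region chosen to keep these images distinct from each other and from the first family's images; (iv) verify injectivity within each family and disjointness of the two image sets, concluding that the total count on the left is at most $\binom{a}{d}$. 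Alternatively — and this may actually be the slickest route — I would prove it by induction on $a-b$: the base case $a=b$ forces $d=c$ (the range is empty otherwise, or reduces to $\binom{b}{c}\le\binom{b}{c}$), and the inductive step uses Pascal's identity $\binom{a}{d}=\binom{a-1}{d}+\binom{a-1}{d-1}$ together with the trivial bound $\binom{b-1}{c-1}\le\binom{a-1}{d-1}$ and the inductive hypothesis applied to $\binom{b}{c}+(a-1-b)\binom{b-1}{c-1}\le\binom{a-1}{d}$.

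I expect the main obstacle to be bookkeeping rather than mathematical depth: in the injective approach, making the filler blocks for the two families genuinely disjoint while still fitting inside $\{b+1,\dots,a\}$ requires the count $(d-c)+(d-c)$ plus the singleton $\{x\}$ to fit, which is not obviously within $a-b$, so one has to be careful — most likely the right normalization is to let the first family's images stay inside $[b]\cup B_1$ and the second family's images be forced to contain an element of $\{b+1,\dots,a\}\setminus B_1$, so the two image sets are separated by a single coordinate test rather than by a disjoint-block argument. For that reason I would lean toward the induction on $a-b$ as the primary writeup: the only thing to watch there is the boundary behavior of the index range $c\le d\le c+(a-b)-1$ when $a-b$ drops to $0$, which should be handled by observing the range becomes empty (so there is nothing to prove) exactly when we would otherwise lose the term $(a-b)\binom{b-1}{c-1}$, keeping the induction consistent.
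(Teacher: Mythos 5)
Your primary route (induction on $a-b$) has a genuine gap at the top of the range of $d$. The inductive hypothesis at $a-1$ only covers $c\le d\le c+(a-1-b)-1=c+(a-b)-2$, but in the step for $a$ you must also prove the case $d=c+(a-b)-1$, and for that value the term $\binom{a-1}{d}$ in Pascal's identity is not controlled by the hypothesis as you have arranged it. The step can be repaired by splitting the other way at the extremal $d$: bound $\binom{b}{c}+(a-1-b)\binom{b-1}{c-1}\le\binom{a-1}{d-1}$ (note $d-1=c+((a-1)-b)-1$ is exactly the top of the hypothesis's range, and $d-1\ge c$ once $a-b\ge 2$; the case $a-b=1$ is handled directly by Pascal, since then $d=c$), and bound the leftover $\binom{b-1}{c-1}$ by $\binom{a-1}{d}$. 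Relatedly, the bound you call trivial, $\binom{b-1}{c-1}\le\binom{a-1}{d-1}$, is false without the range constraint: for $a=7$, $b=6$, $c=3$, $d=6$ one has $\binom{5}{2}=10>6=\binom{6}{5}$. It holds in your setting only because $d-c\le a-b-1$, so one can pass from $\binom{b-1}{c-1}$ to $\binom{a-1}{d-1}$ by a chain of steps of the form $\binom{n}{k}\le\binom{n+1}{k+1}$ and $\binom{n}{k}\le\binom{n+1}{k}$; this needs to be said, both where you invoke it and in the repair above (where the same observation gives $\binom{b-1}{c-1}\le\binom{a-1}{d}$ since $d\le(c-1)+((a-1)-(b-1))$).

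Your injection route is in the spirit of the paper's proof but you leave it unfinished, and the paper sidesteps exactly the per-$d$ bookkeeping you were worried about: it only treats the two endpoints of the range. For $d=c$ the inequality is immediate, since every object counted on the left is a choice of $c$ points out of $a$; for $d=c+(a-b)-1$ it gives an explicit injection (with clusters $C_1$ of size $b$ containing a marked $x$ and $C_2$ of size $a-b$ containing a marked $y$: send $D\in\binom{C_1}{c}$ to $D\cup(C_2\setminus\{y\})$, send $(w,D)$ with $w\in C_2\setminus\{y\}$, $D\subseteq C_1\setminus\{x\}$ to $D\cup\{x\}\cup(C_2\setminus\{w\})$, and send $(y,D)$ to $D\cup C_2$); unimodality of $k\mapsto\binom{a}{k}$ then forces the minimum of $\binom{a}{d}$ over $c\le d\le c+(a-b)-1$ to occur at an endpoint, covering all intermediate $d$ at once. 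Either patch your induction as above or adopt this endpoint-plus-unimodality organization; as currently written, neither of your two sketches is a complete proof.
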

\begin{proof}
As a function of $k$, $\dbinom{n}{k}$ is concave down, so the minimum value of $\dbinom{a}{d}$ is achieved at either $d=c$ or $d=c+(a-b)+1$. We will show that the left-hand side is less than both of these. Note that when $(a-b)=0$, the identity trivially holds, so without loss of generality, $(a-b)\geq 1$. 

Consider a set of $a$ points separated into a cluster $C_1$ of $b$ points, one of which is marked point $x$, and a cluster $C_2$ of $a-b$ points. The left hand side counts the number of ways to pick $c$ points from $C_1$ plus the number of ways to pick 1 point from $C_2$ and $c-1$ points from $C_1\setminus\{x\}$. 

These are all examples of choices of $c$ points from $a$ total points, so this expression is less than or equal to  $\dbinom{a}{c}$. In order to show that it is also less than or equal to $\dbinom{a}{c+(a-b)-1}$, we will exhibit an injection.

Mark a point $y$ in $C_2$. We will map a choice of $c$ points $D\in C_1$ to $D \cup (C_2\setminus\{y\})$. A choice of $c-1$ points $D\in (C_1\setminus \{x\})$ and $w \in  (C_2\setminus\{y\})$ will be mapped to $D\cup\{x\}\cup (C_2\setminus\{w\})$. Lastly, a choice of $c-1$ points $D\in (C_1 \setminus \{x\})$ and $y$ will be mapped to $D\cup C_2$.

The combination of the three choices of $c$ points again is counted by $\dbinom{b}{c} + (a-b)\dbinom{b-1}{c-1}$. Note that the images of these choices are disjoint and each one consists of $c+(a-b)-1$ points, so this count is less than or equal to $\dbinom{a}{c+(a-b)-1}$.
\end{proof}

We will now show that asymptotically $\Circ(n,r)$ maximizes cliques of size $t$ among all graphs with $\Delta(G)\leq r$ with pure shellable complexes.
\begin{prop}
Let $r=2k$. Then for $t\leq m$, $\lim\limits_{n \to \infty} \dfrac{f^*_{r,t}(n)}{n}= \dbinom{m-1}{t-1}$
\end{prop}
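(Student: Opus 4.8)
The plan is to prove matching bounds $\liminf_n f^*_{r,t}(n)/n \ge \binom{m-1}{t-1}$ and $\limsup_n f^*_{r,t}(n)/n \le \binom{m-1}{t-1}$, where $m = r/2+1$; the first is witnessed by the family $\Circ(n,r)$, and the second follows by combining the face-count inequality \eqref{eqn:needlabel}, the structural-facet bound of Lemma \ref{lem:epsilon}, and the binomial inequality of Lemma \ref{lem:binom}.

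For the lower bound I would simply observe that $\Circ(n,r)$ is a pure shellable complex whose underlying graph has maximum degree $r$, so $f^*_{r,t}(n) \ge k_t(\Circ(n,r))$, and the count already performed above via Corollary \ref{prop:formula} (with $r_1 = 0$ and $r_i = 1$ for $i>1$) gives $k_t(\Circ(n,r)) = \binom{m-1}{t-1}\left(\tfrac{m}{t} + n - m\right)$ for $t \le m$. Dividing by $n$ and letting $n\to\infty$ yields $\binom{m-1}{t-1}$.

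For the upper bound, fix $r$ and $t \le m$ and take $n$ large enough that Proposition \ref{prop:nottoobig} forces every pure shellable $K(G)$ with $\Delta(G)\le r$ on $n$ vertices to satisfy $\omega(G)\le m$. Let $K(G)$ realise $f^*_{r,t}(n)$, write $s = \omega(G) = m-i$ with $0 \le i \le m-1$, and let $w$ be its number of structural facets. If $t > s$ then $k_t(G) = 0$ and we are done; otherwise $1 \le t \le s$, and \eqref{eqn:needlabel} together with $w \le (i + o(1))n$ from Lemma \ref{lem:epsilon} (where, by the proof of that lemma, $w \le \tfrac{1}{2} s(s-1) + i(n-s)$, so the error is $O(1/n)$ uniformly in $G$ since $s \le m$) gives, after dividing by $n$,
$$ \frac{k_t(G)}{n} \;\le\; \binom{m-i-1}{t-1} + i\binom{m-i-2}{t-2} + o(1). $$

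The crux is then the arithmetic fact $\binom{m-i-1}{t-1} + i\binom{m-i-2}{t-2} \le \binom{m-1}{t-1}$, and this is precisely Lemma \ref{lem:binom} with $a = m-1$, $b = m-i-1$, $c = t-1$ and $d = t-1$: the hypotheses $0 \le t-1 \le m-i-1 \le m-1$ hold because $1 \le t \le s = m-i$, one has $a-b = i$ and $\binom{b-1}{c-1} = \binom{m-i-2}{t-2}$, and $d = t-1$ lies in the allowed window $[c,\,c+(a-b)-1] = [t-1,\,t+i-2]$ as soon as $i \ge 1$ (when $i = 0$ the inequality is an equality, and in that case the $w$-term is $o(n)$ on its own). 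Hence $k_t(G)/n \le \binom{m-1}{t-1} + o(1)$, and since there are only finitely many possible values of $\omega(G)$, this bound is uniform, so $\limsup_n f^*_{r,t}(n)/n \le \binom{m-1}{t-1}$; combined with the lower bound this is the claim. I do not expect a genuine obstacle here: both substantive inputs (Lemmas \ref{lem:epsilon} and \ref{lem:binom}) are already established, so the argument is essentially bookkeeping, and the only points requiring attention are the trivial case $t > \omega(G)$ and confirming that the $o(1)$ error terms are uniform over the (finitely many) possible clique numbers of the extremal graphs for each $n$.
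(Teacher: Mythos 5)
Your proof is correct and, at its core, follows the same route as the paper: the lower bound witnessed by $\Circ(n,r)$, and for $\omega(G)=m-i$ with $i\geq 1$ the chain consisting of inequality (\ref{eqn:needlabel}), the structural-facet bound of Lemma \ref{lem:epsilon}, and Lemma \ref{lem:binom}. The one genuine difference is how the case $\omega(G)=m$ is dispatched: the paper invokes Theorem \ref{thm:pure} to say that for $n$ large such a pure complex is $\Circ(n,r)$ itself, whereas you stay inside the counting framework and observe that for $i=0$ Lemma \ref{lem:epsilon} already forces $w=o(n)$ (indeed $w\leq \tfrac{1}{2}m(m-1)$ from its proof), so (\ref{eqn:needlabel}) yields $k_t(G)\leq \binom{m-1}{t-1}n+o(n)$ directly, with only Proposition \ref{prop:nottoobig} needed to rule out $\omega(G)>m$. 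This makes your proof of the proposition independent of the structure theorem, which is the heaviest input of the section; what the paper's route buys is not a sharper asymptotic bound here but the identification of the unique extremal pure complex, which it needs anyway for Theorem \ref{thm:main}. Two smaller points in your favor: you give the correct instantiation of Lemma \ref{lem:binom} (namely $a=m-1$, $b=m-i-1$, $c=d=t-1$, requiring $i\geq 1$ for the admissible window), whereas the paper's text cites parameters $b=s-j-1$, $c=t-j-1$ apparently carried over from the proof of Theorem \ref{thm:main}; and you make explicit both the trivial case $t>\omega(G)$ and the uniformity of the $o(1)$ terms over the finitely many possible clique numbers, which the paper leaves implicit.
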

\begin{proof}
For $n$ large enough, Theorem \ref{thm:pure} says we need only look at $\omega(G)< m$. 

Fix a graph $G$ with $n$ vertices. With notation as above and given that $i>1$ we can use (\ref{eqn:needlabel}) to get:
\begin{align*}
k_t(G)&\leq \binom{m-i}{t} + (n-m+i)\binom{m-i-1}{t-1}+w\binom{m-i-2}{t-2}\\
&\leq n\biggl(\binom{m-i-1}{t-1}+(i+o(1))\binom{m-i-2}{t-2}\biggr)+o(r)\\
&< n\binom{m-1}{t-1} + o(r).
\end{align*}
The last inequality comes from applying Lemma \ref{lem:binom} with $a=m-1, b=s-j-1,$ and $c=t-j-1$. Taking limits gives $\lim\limits_{n \to \infty} \dfrac{f^*_r(n)}{n}\leq \binom{m-1}{t-1}$. Since $k(\Circ(n,r))=\binom{m-1}{t-1}n + o(n)$, equality is achieved.
\end{proof}

Now that we have a handle on graphs with small clique number, we can combine the restrictions on larger cliques to make a statement about all shellable complexes with $\Delta(G)\leq r$. 
\begin{theoremsp2}
Let $r=2k$ for some $k \in \mathbb{Z}$. Then $$\lim\limits_{n \to \infty} \dfrac{f_{r,t}(n)}{n}=\dbinom{m-1}{t-1}$$.
\end{theoremsp2}
\begin{proof}

Let $q$ be the number of vertices of $G$ contained in $K_m$'s and $t_0=t_0(r)$ be the parameter specified by Proposition \ref{prop:toomanycooks}. If $q < \max\{t_0,r(r/4)(r/2+1)(r/2+2)+1\}+r$, then $k_m(G)$ is bounded in terms of $r$. By this fact and Proposition \ref{prop:nottoobig}, the number of vertices $y_0$ contained in cliques larger than size $m$ or in cliques with vertices contained in cliques larger than size $m$ is bounded in terms of $r$. We can give an upper bound for the number of $t$-cliques contained in those vertices by $\binom{y_0}{t}$. Now that we are no longer restricting to just pure complexes, we cannot refer to structural facets or vertex-adding facets without specifying size.  Denote by $b_j$ the number of vertex-adding facets of size $j$ and by $c_j$ the number of structural facets of size $j$ not contained in the $y_0$ vertices from before. Let $s$ be the size of the largest facet less than $m$ and $i=m-s$.

Then we can write
$$k_t(G)\leq \binom{y_0}{t}+\sum_{j=0}^s \biggl(b_{s-j}\binom{m-i-j-1}{t-j-1} + c_{s-j}\binom{m-i-j-2}{t-j-2}\biggr)$$
By applying Lemma \ref{lem:epsilon} to the skeleta at each dimension, we have that
\begin{align*}
c_s&<(i+o(1))b_s \\
c_s+c_{s-1}&<(i+o(1))b_s +(i+1+o(1))b_{s-1}\\
\vdots \\
\sum_{j=0}^s c_{s-j} &< \sum_{j=0}^s (i+j+o(1))b_{s-j}
\end{align*}
Multiplying the $j$th equation by $\dbinom{s-3-j}{t-2-j}$ and summing together gives

\begin{align}\sum_{j=0}^s \binom{s-j-2}{t-j-2}c_{s-j}< \sum_{j=0}^s \binom{s-j-2}{t-j-2}(i+j+o(1))b_{s-j} \tag{**}
\end{align}
Then by applying $(**)$, given $m>=2$, 
 \begin{align*}
k_t(G)&\leq \binom{y_0}{t}+ \binom{m-i}{t}+\sum_{j=0}^s \biggl(b_{s-j}\binom{m-i-j-1}{t-j-1} + c_{s-j}\binom{m-i-j-2}{t-j-2}\biggr)\\
&< \binom{y_0}{t} + \binom{m-i}{t}+\sum_{j=0}^s b_{s-j}\biggl(\binom{s-j-1}{t-j-1}+(i+j+o(1))\binom{s-j-2}{t-j-2}\biggr)\\
&< \binom{y_0}{t}+\binom{m-i}{t} + \max_j\left\{\binom{s-j-1}{t-j-1}+\binom{s-j-2}{t-j-2}(m-s+j)\right\}\sum_{j=0}^s b_j\\
&\leq \binom{y_0}{t} + \binom{m-i}{t} + \binom{m-1}{t-1}\sum_{j=0}^s b_j \tag{*}\\
&\leq \binom{y_0}{t} + \binom{m-i}{t} + \binom{m-1}{t-1}(n-s+1)\\
&\leq \binom{m-1}{t-1}n+o(n)
\end{align*}
 where (*) comes from applying Lemma \ref{lem:binom} with $a=m-1,b=s-j-1,$ and $c=t-j-1$.
 
 If $q\geq \max\{t_0,r(r/4)(r/2+1)(r/2+2)+1\}+r$, in particular $k_m(G)>t_0$, so by Proposition \ref{prop:toomanycooks}, $\omega(G)\leq r/2+1$. Additionally, $q\geq r(r/4)(r/2+1)(r/2+2)+1$, so if $F_1,\ldots,F_i$ are the facets of size $m$, then $\gen{F_1\ldots,F_i}$ is pure with clique number $m$ so by Theorem \ref{thm:pure} it is $\Circ(q,r)$. Lower sized structural facets can only be built on the $r/2$ vertices on each end of $\Circ(q,r)$ and any additional vertices that come from vertex-adding facets of lower faces. We can bound the structural facets on those $r$ total vertice on the end by $2^r$. Then using the same process as above we have
\begin{align*}
 k_t(G)\leq& \binom{r}{t} + \binom{m}{t} + (q-m)\binom{m-1}{t-1} \\ +&\sum_{j=0}^{m-1} b_{m-1-j}\biggl(\binom{m-1-j-1}{t-j-1} + c_{m-1-j}\binom{m-1-j-2}{t-j-2}\biggr)\\
 \leq& \binom{r}{t} + \binom{m}{t} + (q-m)\binom{m-1}{t-1}+ \binom{m-1}{t-1}(n-q-m)\\
\leq& \binom{m-1}{t-1}n + o(n)
\end{align*}
In both cases, taking limits gives that $\lim\limits_{n \to \infty} \dfrac{f_{r,t}(n)}{n}\leq \binom{m-1}{t-1}$. As before, $\Circ(n,r)$ achieves equality.   
\end{proof}
As a corollary, we get the asymptotic value for $f_r$ as well.
\begin{cor}
Let $r=2k$ for some $k \in \mathbb{Z}$. Then $\lim\limits_{n \to \infty} \dfrac{f_r(n)}{n}=2^{r/2}$. 
\end{cor}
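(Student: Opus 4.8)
The plan is to derive this directly from Theorem \ref{thm:main} by summing over all clique sizes $t$, together with the observation that cliques larger than $m = r/2+1$ contribute only $O(1)$ to $k(G)$.

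First I would note that for any graph $G$ on $n$ vertices with $\Delta(G)\leq r$ and $K(G)$ shellable, we can split $k(G) = \sum_{t\geq 1} k_t(G)$ into the contribution from cliques of size at most $m$ and the contribution from cliques of size greater than $m$. By Proposition \ref{prop:nottoobig}, there are only $O(1)$ faces larger than $m$ (with the implied constant depending only on $r$), so $\sum_{t > m} k_t(G) = O(1)$. For the remaining terms, Theorem \ref{thm:main} gives $k_t(G) \leq \binom{m-1}{t-1} n + o(n)$ for each fixed $t \leq m$; since there are only $m$ such values of $t$, summing yields
\begin{align*}
k(G) &\leq \sum_{t=1}^{m} \binom{m-1}{t-1} n + o(n) + O(1) = 2^{m-1} n + o(n) = 2^{r/2} n + o(n).
\end{align*}
Dividing by $n$ and taking the limit supremum gives $\limsup_{n\to\infty} f_r(n)/n \leq 2^{r/2}$.

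For the matching lower bound, I would use the family $G_n = \Circ(n,r)$, which is shellable with $\Delta(G_n)\leq r$. Using the formula $k_t(\Circ(n,r)) = \binom{m-1}{t-1}\bigl(\frac{m}{t} + n - m\bigr)$ computed earlier in the text (valid for $t\leq m$, and $k_t = 0$ for $t > m$), one sums over $t$ from $1$ to $m$ to get $k(\Circ(n,r)) = 2^{m-1} n + O(1) = 2^{r/2} n + O(1)$. Hence $\lim_{n\to\infty} k(\Circ(n,r))/n = 2^{r/2}$, which forces $\liminf_{n\to\infty} f_r(n)/n \geq 2^{r/2}$. Combining the two bounds shows the limit exists and equals $2^{r/2}$, and that it is achieved along the sequence $\Circ(n,r)$, matching the additional claim in Theorem \ref{thm:mainz}.

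There is essentially no serious obstacle here: the result is a genuine corollary, and the only point requiring a modicum of care is making sure the error terms behave correctly when summing finitely many inequalities each carrying an $o(n)$ term — this is fine since $m$ is a constant depending only on $r$, so a finite sum of $o(n)$ terms is still $o(n)$, and the large-clique contribution is genuinely bounded by Proposition \ref{prop:nottoobig} rather than merely $o(n)$. The identity $\sum_{t=1}^{m}\binom{m-1}{t-1} = 2^{m-1}$ is just the binomial theorem. I would present this as a short proof immediately following the statement.
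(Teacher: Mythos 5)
Your proposal is correct and follows essentially the same route as the paper's own (much terser) proof: bound $k_t(G)$ for $t\leq m$ via Theorem \ref{thm:main}, dismiss the cliques of size larger than $m$ via Proposition \ref{prop:nottoobig}, sum the binomial coefficients to get $2^{r/2}$, and obtain the matching lower bound from $\Circ(n,r)$. Your write-up simply makes explicit the summation of the $o(n)$ error terms and the lower-bound computation that the paper leaves implicit.
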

\begin{proof}
From Theorem \ref{thm:main}, we have $k_t(G)$ for $t\leq m$, and by Proposition \ref{prop:nottoobig}, $k_t(G)=o(n)$ for $t>m$.
\end{proof}
\section{The Odd Case}\label{section:odd}
When $r$ is odd, less is known. Regular circulant graphs don't exist for $r$ odd, and our construction leaves room for an improvement to the linear term. 
\begin{definition}
For $r$ odd and $n>r$, define $\Circc(n,r)$ as

$$\Circc(n,r)=\Circ(n,r)\cup \{\{i,i+\lceil n/2 \rceil \}: i\leq \lfloor n/2 \rfloor \}$$  
\end{definition}
Let $m=\lfloor r/2 \rfloor + 1$ for the remainder of this section. We don't have the exact formulation desired for an analogue of Proposition \ref{prop:toomanycooks}, but we do get the following.
\begin{prop}\label{prop:oddtoomanycooks}
Let $r=2k+1$. Suppose $G$ has $t$ $K_{m}$'s. Then for $t\geq t_0=t_0(r)$, $\omega(G) = \lceil r/2\rceil  + 1 $.
\end{prop}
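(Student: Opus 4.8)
The plan is to push the argument of Proposition~\ref{prop:toomanycooks} through the odd case and watch the parity obstruction appear. Fix a decreasing shelling order (by the Rearrangement Lemma), write $r=2k+1$ so that $m=k+1$, and recall from Proposition~\ref{prop:nottoobig} that only $O(1)$ facets have size exceeding $r/2+1$, i.e.\ size $\ge k+2=\lceil r/2\rceil+1$; let $A$ be the (bounded, depending only on $r$) set of vertices they contain. If no facet has size $\ge k+2$ then $\omega(G)=m$ and there is nothing to do, so assume $A\ne\emptyset$. By Lemma~\ref{lem:folklore}, having $t\ge t_0$ copies of $K_m$ forces $n$ to be as large as we like in terms of $r$; since only $O(1)$ of the $K_m$'s meet $A$ or fail to be facets, for $t_0$ large there are many $m$-facets disjoint from $A$.

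Let $F_i$ be the first $m$-facet in the shelling order that is disjoint from $A$. Then $i>1$ and, the complex being connected, $\langle F_i\rangle\cap\langle F_1,\dots,F_{i-1}\rangle$ is nonempty and, by shellability, pure of dimension $|F_i|-2=k-1$; choose a generating $k$-face $H\subseteq F_i$ and an earlier facet $F_j\supseteq H$. Since $H$ avoids $A$, no facet of size $\ge k+2$ contains it, so $|F_j|\le k+1$; the decreasing order gives $|F_j|\ge|F_i|=k+1$, hence $F_j=H\cup\{v\}$ is itself an $m$-facet. As $F_j$ is an $m$-facet preceding the first $m$-facet disjoint from $A$, we must have $F_j\cap A\ne\emptyset$, and since $H\cap A=\emptyset$ this forces $v\in A$: so $v$ lies in a facet $F^\ast$ with $|F^\ast|\ge k+2$. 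Now $v$ is adjacent to all of $H$ and to all of $F^\ast\setminus\{v\}$, and $H\cap F^\ast=\emptyset$ (one set avoids $A$, the other lies in it), so $d_G(v)\ge|F^\ast|-1+k$.

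This is where the even and odd cases split. For even $r$ the corresponding estimate reads $d(v)\ge(k+1)+k>2k=r$, immediately excluding every facet larger than $m$. For $r=2k+1$ we only get $d_G(v)\ge|F^\ast|-1+k$: if $|F^\ast|\ge k+3$ this is $\ge 2k+2>r$, a contradiction, so no such $v$ sits in a facet of size $\ge k+3$; but when $|F^\ast|=k+2$ the bound is merely $d_G(v)\ge 2k+1=r$, which is not yet a contradiction and instead forces $N_G(v)=(F^\ast\setminus\{v\})\cup H$ exactly. To finish I would globalize this rigidity: the union $G_p$ of all facets of size $\ge k+2$ is connected on $O(1)$ vertices, so every such large facet is reached from the $\Theta(n)$ smaller facets, and chasing how the ``path'' of $m$-facets is forced to shed the $k$ vertices it shares with a large facet---the last-shed vertex inheriting $\ge|F^\ast|-1+k$ neighbours---rules out any facet of size $\ge k+3$, yielding $\omega(G)\le\lceil r/2\rceil+1$, while the equality $N_G(v)=(F^\ast\setminus\{v\})\cup H$ pins down the remaining local structure around $A$.

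The main obstacle is precisely this tightness. In the even case the decisive degree count overshoots $r$ by one, so every oversized facet dies in a single line; for odd $r$ it lands exactly on $r$ as soon as the offending large facet has size $\lceil r/2\rceil+1$, so a one-line degree argument can no longer exclude all oversized facets. Converting the equality case into global information about how the $O(1)$-sized subcomplex spanned by the large facets must attach to the bulk of the $m$-facets is the genuinely new ingredient, and it is the reason the odd analogue of Proposition~\ref{prop:toomanycooks} only delivers the bound $\lceil r/2\rceil+1$ rather than the hoped-for $\lfloor r/2\rfloor+1$.
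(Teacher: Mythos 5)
Your executed steps reproduce what the paper actually does here: its entire proof of this proposition is the remark that the argument of Proposition~\ref{prop:toomanycooks} carries over, with the one degree of slack now letting facets of size $\lceil r/2\rceil+1$ survive the degree count. Your transplant of that argument is correct and is exactly the intended one --- the first $m$-facet disjoint from the vertex set $A$ of the $O(1)$ oversized facets (Proposition~\ref{prop:nottoobig}), the codimension-one intersection forcing an earlier $m$-facet $H\cup\{v\}$ with $v\in A$, and the count $d_G(v)\ge |F^\ast|-1+k$, which overshoots $r$ only when $|F^\ast|\ge k+3$. (Also, as you implicitly do, the statement has to be read as the upper bound $\omega(G)\le\lceil r/2\rceil+1$; many $K_m$'s do not by themselves force a clique of that size, and the paper's ``slightly larger clique number'' remark confirms this reading.)

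The gap is the step you yourself flag and then only sketch: the degree count shows that the \emph{particular} vertex $v$ you produced lies in no facet of size $\ge k+3$, but the proposition needs that \emph{no} facet of $G$ has size $\ge k+3$, and your ``globalize this rigidity'' paragraph is a plan, not an argument (the paper is no more explicit, but the point does need an argument). It can be closed inside the same framework without any new machinery: let $B$ be the vertex set of the facets of size $\ge k+3$ and suppose $B\ne\emptyset$; take the \emph{first} facet $F_i$ in the decreasing shelling with $|F_i|\le k+2$ and $F_i\cap B=\emptyset$ (it exists because many $m$-facets avoid even the larger set $A$, and $i>1$ since $F_1$ has size $\ge k+3$). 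Its codimension-one intersection face $H$ (of size $|F_i|-1\ge k$) lies in an earlier facet $F_j$; $F_j$ cannot have size $\ge k+3$, since then $H\subseteq F_j\subseteq B$, so $|F_j|\in\{k+1,k+2\}$ and, by minimality of $i$, $F_j$ meets $B$. Any $u\in F_j\cap B$ lies outside $H$, is adjacent to all of $H$ (at least $k$ vertices disjoint from $B$, hence from $u$'s big facet), and has at least $k+2$ neighbours inside its facet of size $\ge k+3$, so $d_G(u)\ge 2k+2>r$ in every case --- a contradiction that eliminates all facets of size $\ge k+3$ at once, rather than only those through your special vertex.
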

The proof is identical, but since we have the wiggle room for one degree, we can have a slightly larger clique number.

As before, consider a pure complex with $\omega(G)=s=m-i$ 
with $n-s+1$ vertex-adding facets and $w$ structural facets. We have an analogue of Lemma \ref{lem:epsilon}:
\begin{lemma}\label{lem:oddepsilon}
 With $K(G)$ described above, $w<(i+1/2+o(1))n$ for $n$ large enough.
\end{lemma}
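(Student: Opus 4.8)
The plan is to replay the proof of Lemma~\ref{lem:epsilon} almost verbatim, keeping track of where the parity of $r$ intervenes. The two inputs are (a) the free-degree accounting $(*)$ of Proposition~\ref{prop:nottoobig}---a vertex-adding facet of size $s$ spends $s-1$ units of free degree on the vertices it already sees and recovers $r-(s-1)$ units through the new vertex, a net change of $r-2s+2$---and (b) Lemma~\ref{lem:structfacets}, that each structural facet adds exactly one edge and hence spends exactly $2$ units of free degree. Both were proved only under the hypotheses $\Delta(G)\le r$ and $K(G)$ shellable, so they apply unchanged when $r$ is odd. Since $K(G)$ is pure with $\omega(G)=s=m-i$, every facet is either vertex-adding or structural, it has exactly $n-s+1$ vertex-adding facets (the first is the initial $K_s$, and each later one introduces exactly one new vertex) and $w$ structural facets, and the starting $K_s$ carries $s(r-s+1)=O(1)$ units of free degree.

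The sole computational change is the value of $r-2s+2$. With $r=2k+1$ we have $m=\lfloor r/2\rfloor+1=(r+1)/2$, so $r=2m-1$ and $s=m-i$ give $r-2s+2=2i+1$ (versus $2i$ in the even case). Since free degree can never become negative,
$$ s(r-s+1) + (n-s)(2i+1) - 2w \ge 0, $$
and solving for $w$ gives $w\le \tfrac12 s(r-s+1)+\tfrac12(n-s)(2i+1) = (i+\tfrac12)n + O(1)$, hence $w<(i+\tfrac12+o(1))n$ for $n$ large enough, as claimed.

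I do not anticipate a real obstacle: the content is an adaptation, and the only points that need checking are that Proposition~\ref{prop:nottoobig} and Lemma~\ref{lem:structfacets} are genuinely parity-agnostic (they are) and that the count ``number of vertex-adding facets $=n-s+1$, the rest structural'' survives in the pure setting (it does, for the same reason as when $r$ is even). The one conceptual point worth flagging is that the extra $\tfrac12$ over Lemma~\ref{lem:epsilon} is exactly the ``wiggle room for one degree'' mentioned just before Proposition~\ref{prop:oddtoomanycooks}: an odd degree bound leaves one unit of free degree unused per vertex-adding facet relative to the even case, and that accumulates linearly in $n$.
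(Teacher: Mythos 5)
Your proposal is correct and follows essentially the same route as the paper: the free-degree balance $s(\cdot)+(n-s)(r-2s+2)-2w\ge 0$ from Proposition \ref{prop:nottoobig} and Lemma \ref{lem:structfacets}, with $r-2s+2=2i+1$ in the odd case, solved for $w$. The only cosmetic difference is the constant initial term (you write the exact free degree $s(r-s+1)$ where the paper writes $s(s-1)$), which is $O(1)$ either way and irrelevant to the asymptotic bound.
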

\begin{proof}
Using formula $(*)$ from Proposition \ref{prop:nottoobig}, we start with $(s)(s-1)$ free degree and for each other vertex adding facet we get $(r-2s+2)$ additional free degree. From Lemma \ref{lem:structfacets}, we have that each structural facet adds exactly one edge, so each reduces free degree by $2$. Therefore,
\begin{align*}
s(s-1)+(n-s)(r-2s+2)-2w&\geq 0\\
\frac{s}{2}(s-1)+ 1/2(n-s)(r-2s+2)&\geq w\\
(r/2+1-s)n+o(1)n &> w\\
(m+1/2-s+o(1))n &>w\\
(i+1/2+o(1))n &>w.\\
\end{align*}
\end{proof}
There is not a clear analogue of Theorem \ref{thm:unique}, as the additional free degree allows for a number of non-isomorphic trees. While it is possible to show that $\Circc(n,r)$ is asymptotically optimal compared to graphs with $\omega(G)<\lfloor r/2 \rfloor + 1$, there is possible room for improvement at $\omega(G)=\lfloor r/2 \rfloor + 1$. That leads to two questions.
\begin{question}
Let $r=2k+1$ for some $k \in \mathbb{Z}$. Is there a pure complex $K(G)$ with $\omega(G)=\lfloor r/2 \rfloor + 1$ with $k(G)\geq k(\Circ(n,r))$ for large $n$?
\end{question}
In addition to considering the total number of cliques, the additional flexibility with odd degree may allow for maximizing $k_t$ for different graphs.
\begin{question}
What is $\lim\limits_{n \to \infty}\dfrac{f_{r,t}(n)}{n}$ for $r$ odd? Is there a family of graphs that simultaneously achieve this limit for $t\leq \lfloor r/2 \rfloor + 1$?
\end{question}
\bibliographystyle{plain}

\bibliography{myreferences}
\end{document}